\documentclass[11pt,a4paper]{article}

\usepackage{enumerate}
\usepackage{a4wide}
\usepackage{amsfonts,amsmath}
\usepackage{latexsym}
\usepackage{xcolor}
\usepackage{amsthm}
\usepackage[colorlinks=true, linkcolor=black]{hyperref}
\usepackage{times}
\author{Pavlos Kalantzopoulos}
\date{}
\title{Commutativity in a Poincar\'e-Type Inequality}

\newtheorem{theorem}{Theorem}

\newcommand{\R}{\mathbb{R}}

\newtheorem*{theoremrep}{Theorem \ref{unconditionalK}}

\newtheorem{lemma}{Lemma}

\newtheorem{prop}[lemma]{Proposition}
\newtheorem{remark}[lemma]{Remark}
\newtheorem{question}[lemma]{Question}

\newtheorem{conjecture}[lemma]{Conjecture}

\begin{document}
	
\maketitle

\begin{abstract}
We study a Poincar\'e-type inequality for Gaussian measures restricted to origin-symmetric convex sets. The inequality arises from the second variation at $t=0$ of the function
\begin{equation*}
t\longmapsto \gamma_{\Sigma}\left(e^{tA}K\right),
\end{equation*}
where $\Sigma$ and $A$ are symmetric matrices. The relevance of this function comes from Saroglou's reformulation \cite{Saroglou14} of the log-Brunn--Minkowski conjecture. We show that $\Sigma$ and $A$ commute if and only if the inequality holds for every box of the form  $UD B_\infty^n$, where $U$ is orthogonal and $D$ is diagonal. In the non-commuting case, this yields counterexamples
for orthogonal boxes whose axes can be chosen arbitrarily close to the coordinate axes. We also show that the inequality holds when $K$ is unconditional with respect to an eigenbasis of $\Sigma$ and $A$ is an arbitrary symmetric matrix.
\end{abstract}


\section{Introduction}
We fix an inner product $\langle \cdot,\cdot\rangle$ on $\R^n$ and an orthonormal basis $e_1,\ldots,e_n$. By a convex body we mean a compact convex set with nonempty interior. Unless otherwise stated, unconditional convex bodies and diagonal matrices are understood with respect to this fixed basis. We denote by $B_p^n$ the $\ell_p$-unit ball with respect to this basis. By an orthogonal box, we mean a box whose sides are parallel to the vectors of some orthonormal basis, while a coordinate box is an orthogonal box whose sides are parallel to the fixed basis $e_1,\ldots,e_n$. We write $\mathcal{S}$ and $\mathcal{D}$ for the spaces of symmetric and diagonal matrices on $\R^n$, respectively, and by $\mathcal{S}_{\succ0}$ and $\mathcal{D}_{\succ0}$ their positive-definite elements. 

For $\Sigma\in\mathcal S_{\succ0}$, let $\gamma_\Sigma$ denote the centered Gaussian probability measure on $\R^n$ with covariance matrix $\Sigma$. For an origin-symmetric convex set $K\subseteq\R^n$, let $\mu_{K,\Sigma}$ denote the Gaussian measure $\gamma_\Sigma$ restricted to $K$, namely the even log-concave probability measure given by
\begin{equation*}
d\mu_{K,\Sigma}(y)=\frac{\mathbf{1}_K(y)}{\gamma_\Sigma(K)}\,d\gamma_\Sigma(y).
\end{equation*}

We study the following question.
\begin{question}\label{Kproblem}
For which triples $(K,\Sigma,A)$, where $K\subseteq\R^n$ is an origin-symmetric convex set, $\Sigma\in\mathcal{S}_{\succ0}$, and $A\in\mathcal{S}$, does the following inequality hold?
\begin{align}\label{VarlogBM1}
\operatorname{Var}_{\mu_{K,\Sigma}} \left(\left\langle A\Sigma^{-1}y,y\right\rangle\right)\leq\int_{\R^n}\left\langle(A^2\Sigma^{-1}+A\Sigma^{-1}A)y,y\right\rangle\,d\mu_{K,\Sigma}(y).
\end{align}
\end{question}
We note two special cases. When $K=\R^n$, inequality \eqref{VarlogBM1} holds with equality for every $\Sigma\in\mathcal{S}_{\succ0}$ and every $A\in\mathcal{S}$. This follows from the trace formulas for Gaussian quadratic forms (see Appendix A). Another simple case is when both $\Sigma$ and $A$ are diagonal and $K$ is a coordinate box. In this setting, the inequality reduces to a one-dimensional inequality. This case is contained in the more general commuting setting discussed below in Theorem~\ref{propdirectionalcommuting1}.

\vspace{0.5cm} 
Inequality \eqref{VarlogBM1} is closely related to the log-Brunn--Minkowski conjecture of B\"or\"oczky, Lutwak, Yang, and Zhang \cite{BLYZ12}. Recall that the support function of a convex body $K\subseteq\R^n$ is defined by $h_K(y)=\sup_{x\in K}\langle x,y\rangle$. The logarithmic mean of origin-symmetric convex bodies $K,L\subseteq\R^n$, for $\lambda\in[0,1]$, is given by
\begin{equation*}
(1-\lambda)K+_o\lambda L =\left\{ x\in\R^n: \langle x,y\rangle \leq h_K(y)^{1-\lambda}h_L(y)^\lambda \ \text{for all }y\in\R^n \right\}.
\end{equation*}
Since the geometric mean $h_K^{1-\lambda}h_L^\lambda$ need not be a support function, $(1-\lambda)K+_o\lambda L$ is defined as the largest convex body whose support function is bounded above by $h_K^{1-\lambda}h_L^\lambda$. B\"or\"oczky, Lutwak, Yang, and Zhang \cite{BLYZ12} conjectured the following.

\begin{conjecture}[Log-Brunn--Minkowski conjecture]
For origin-symmetric convex bodies $K,L\subseteq\R^n$ and $\lambda\in[0,1]$,
\begin{equation*}
\big|(1-\lambda)K+_o\lambda L\big| \geq |K|^{1-\lambda}|L|^\lambda.
\end{equation*}
\end{conjecture}

The logarithmic Brunn--Minkowski conjecture was proved in dimension $n=2$ by B\"or\"oczky, Lutwak, Yang, and Zhang \cite{BLYZ12}. In dimension $n=3$, the inequality was proved for two special classes of convex bodies \cite{YZ19}. In higher dimensions, it is known under additional symmetry assumptions, including for unconditional convex bodies \cite{Saroglou15}, complex convex bodies \cite{Rotem14}, and convex bodies invariant under $n$ independent linear reflections \cite{BK22}. Local results are known near the Euclidean ball \cite{CLM17} and when the reference body is a zonoid \cite{vanHandel23}. Recent preprints treat the case of $n-2$ reflection symmetries \cite{Lu26} and the local problem for bodies of revolution \cite{Iffland26}.

Combining Saroglou's result \cite{Saroglou14} with the second-variation computation of Cordero-Erausquin and Rotem \cite{CR20} yields the following equivalent formulation; see  section~\ref{PrositionEquivalence11} for the details.

\begin{prop}\label{PrositionEquivalence}
The following are equivalent:
\begin{enumerate}[(i)]
\item The log-Brunn--Minkowski conjecture holds.
\item Inequality \eqref{VarlogBM1} holds for $K=B^n_{\infty}$, any $\Sigma\in\mathcal{S}_{\succ0}$, and any $A\in\mathcal{D}$.
\end{enumerate}
\end{prop}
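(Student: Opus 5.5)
The plan is to pass through Saroglou's reformulation \cite{Saroglou14} and then identify inequality \eqref{VarlogBM1} as the second-derivative form of a log-concavity statement. As I recall it, Saroglou shows that the log-Brunn--Minkowski conjecture in all dimensions is equivalent to the following statement: for every centered Gaussian measure $\gamma_\Sigma$ and all coordinate boxes $P=e^{D_1}B_\infty^n$, $Q=e^{D_2}B_\infty^n$ with $D_1,D_2\in\mathcal D$,
\begin{equation*}
\gamma_\Sigma\big(e^{(1-\lambda)D_1+\lambda D_2}B_\infty^n\big)\ge \gamma_\Sigma(P)^{1-\lambda}\gamma_\Sigma(Q)^{\lambda}.
\end{equation*}
This is the form of his result I will use. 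It rests on two facts: for coordinate boxes the logarithmic mean is exactly the box with geometric-mean side lengths; and the measure-theoretic log-BM for even log-concave measures, restricted to Gaussians and boxes, already implies the full conjecture. I will quote this equivalence rather than reprove it. Its precise form (Gaussians versus general even log-concave measures, arbitrary bodies versus boxes) is the point I must check most carefully against \cite{Saroglou14}, since the whole proof depends on it.

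\textbf{Step 1: reduce to log-concavity of $f(t)=\gamma_\Sigma(e^{tA}B_\infty^n)$ for $A\in\mathcal D$.} The displayed inequality says that $D\mapsto\log\gamma_\Sigma(e^DB_\infty^n)$ is concave on $\mathcal D$. This is equivalent to concavity along every line $D_0+tA$ with $A\in\mathcal D$. I will absorb the base point into the covariance: $\gamma_\Sigma(e^{D_0}L)=\gamma_{e^{-D_0}\Sigma e^{-D_0}}(L)$. Concavity along lines then becomes $(\log f)''(0)\le0$ for all $\Sigma\in\mathcal S_{\succ0}$ and all $A\in\mathcal D$. Since $\Sigma$ ranges over all of $\mathcal S_{\succ0}$, the family is closed under this change, so pointwise second-derivative conditions at $t=0$ are equivalent to concavity everywhere.

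\textbf{Step 2: compute $(\log f)''(0)$, following \cite{CR20}.} Substituting $y=e^{tA}x$ gives, up to a constant,
\begin{equation*}
f(t)=c\int_{B_\infty^n}e^{\varphi(t,x)}\,dx,\qquad \varphi(t,x)=t\,\mathrm{tr}A-\tfrac12\langle e^{tA}\Sigma^{-1}e^{tA}x,x\rangle .
\end{equation*}
Differentiating at $t=0$ and symmetrizing the quadratic forms, I get $\partial_t\varphi=\mathrm{tr}A-\langle A\Sigma^{-1}x,x\rangle$ and $\partial_t^2\varphi=-\langle(A^2\Sigma^{-1}+A\Sigma^{-1}A)x,x\rangle$. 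The standard identity $(\log f)''=\mathbb E\,\partial_t^2\varphi+\operatorname{Var}\partial_t\varphi$, with expectation taken under the normalized measure at $t=0$, namely $\mu_{B_\infty^n,\Sigma}$, then gives
\begin{equation*}
(\log f)''(0)=\operatorname{Var}_{\mu_{B_\infty^n,\Sigma}}\big(\langle A\Sigma^{-1}y,y\rangle\big)-\int\langle(A^2\Sigma^{-1}+A\Sigma^{-1}A)y,y\rangle\,d\mu_{B_\infty^n,\Sigma}.
\end{equation*}
The constant $\mathrm{tr}A$ does not affect the variance. So $(\log f)''(0)\le0$ is exactly \eqref{VarlogBM1} for $K=B_\infty^n$.

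\textbf{Step 3: conclude.} Combining Steps 1 and 2 with Saroglou's equivalence gives (i)$\Leftrightarrow$(ii). Steps 1 and 2 are routine. The main obstacle is the correct citation and form of Saroglou's equivalence, in particular the direction showing that the Gaussian-box statement implies the full log-BM conjecture. If his theorem is stated for general even log-concave measures, I would try first to pass to Gaussians by a limiting argument with $\Sigma=s\,\mathrm{Id}$, $s\to\infty$, which approximates Lebesgue measure on bounded sets.
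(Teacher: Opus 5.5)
Your proposal is correct and follows the paper's proof. Your Gaussian two-box form of Saroglou's equivalence becomes, via exactly your absorption $\Sigma\mapsto e^{-D_0}\Sigma e^{-D_0}$, the paper's statement that $t\mapsto\gamma_\Sigma(e^{tA}B_\infty^n)$ is log-concave for all $\Sigma\in\mathcal S_{\succ0}$ and $A\in\mathcal D$; from there, your reduction to $t=0$ and your second-variation computation match the paper's argument and Lemma~\ref{ComputationSecondDerivative}.

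Your contingency plan is not needed, and it would not work anyway. As $s\to\infty$, the measures $\gamma_{s\,\mathrm{Id}}$ tend, up to normalization, to Lebesgue measure, for which $t\mapsto|e^{tA}B_\infty^n|=2^n e^{t\,\mathrm{tr}A}$ is trivially log-affine, so this limit cannot recover general even log-concave measures. The paper instead cites \cite[Theorems~3.1 and 5.1]{Saroglou14}. Theorem~5.1 there is already phrased with the densities $\varphi_n(Tx)$, which give $\gamma_{(T^TT)^{-1}}$ up to the factor $|\det T|^{-1}$, so no passage from general even log-concave measures to Gaussians is required.
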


Second variation arguments lead to quadratic-form formulations of local $L_p$-Brunn--Minkowski inequalities and, equivalently, to spectral formulations in the smooth setting. Colesanti \cite{Colesanti08} derived Poincar\'e-type inequalities on the boundary of a convex body from the Brunn--Minkowski inequality. Colesanti, Livshyts, and Marsiglietti \cite{CLM17} formulated the local log-Brunn--Minkowski inequality as a Poincar\'e-type inequality for the cone-volume measure. Kolesnikov and Milman \cite{KoM22} related local $L_p$-Brunn--Minkowski inequalities to spectral-gap estimates for the Hilbert--Brunn--Minkowski operator. Further developments of the local theory can be found in \cite{CHLL20,Put21,Milman25,KLR25}. Related Brunn--Minkowski inequalities for Gaussian measure were established in \cite{KL21,EsM21}. In the present setting, the corresponding Poincar\'e-type inequality \eqref{VarlogBM1} is formulated for the Gaussian measure restricted to the convex body, whereas the local inequalities above are formulated in terms of measures on the sphere.

\vspace{0.5cm} 

We next consider the case where $\Sigma$ and $A$ commute, or equivalently, share a common eigenbasis. We will show that in this case \eqref{VarlogBM1} holds for every origin-symmetric convex set $K$. This follows from the symmetry-improved Gaussian Poincar\'e inequality of Cordero-Erausquin, Fradelizi, and Maurey \cite{CFM04}, after a linear change of variables reducing the covariance matrix $\Sigma$ to the identity. More precisely, using the approximation argument described just before
\cite[equation~(8)]{CFM04}, together with \cite[Remark~1, inequality~(14)]{CFM04}, we have
\begin{equation}\label{EvenGaussianPoincare}
\operatorname{Var}_{\mu_{K,I_n}}(f)\leq\frac{1}{2}\int_{\R^n}|\nabla f(x)|^2\,d\mu_{K,I_n}(x),
\end{equation}
for every origin-symmetric convex set $K\subseteq\R^n$ and every smooth even function $f:\R^n\to\R$ such that $f,\nabla f\in L^2(\mu_{K,I_n})$. This is the symmetry-improved Gaussian Poincar\'e inequality, with constant $1/2$ instead of $1$, which reflects the symmetry of $K$ and the evenness of $f$. The commuting case can in fact be characterized in terms of \eqref{VarlogBM1}, as shown in the following theorem.

Our result in the commuting setting is the following.

\begin{theorem}\label{propdirectionalcommuting1}
Let $\Sigma\in\mathcal S_{\succ0}$ and
$A\in\mathcal D$. Then the following are equivalent:
\begin{enumerate}[(i)]
\item $A$ and $\Sigma$ commute.
\item Inequality \eqref{VarlogBM1} holds for every origin-symmetric orthogonal box $K\subseteq\R^n$.
\end{enumerate}
\end{theorem}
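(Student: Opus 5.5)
We prove the two implications separately. For (i)$\Rightarrow$(ii) we will in fact prove more: inequality \eqref{VarlogBM1} holds for every origin-symmetric convex set $K$, by reducing to \eqref{EvenGaussianPoincare}. For (ii)$\Rightarrow$(i) we argue by contraposition. We degenerate orthogonal boxes to a line, where \eqref{VarlogBM1} becomes an explicit quadratic condition on a direction $u$, and we show that this condition fails for some $u$ close to a coordinate axis.

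\emph{(i)$\Rightarrow$(ii).} Substitute $y=\Sigma^{1/2}x$. Then $\mu_{K,\Sigma}$ is the push-forward of $\mu_{K',I_n}$ with $K'=\Sigma^{-1/2}K$, which is again origin-symmetric and convex. If $A\Sigma=\Sigma A$, then $A$ commutes with $\Sigma^{\pm1/2}$, so
\[
\langle A\Sigma^{-1}y,y\rangle=\langle \Sigma^{1/2}A\Sigma^{-1}\Sigma^{1/2}x,x\rangle=\langle Ax,x\rangle=:f(x).
\]
Here $f$ is smooth and even, and $\nabla f=2Ax$, so $\tfrac12|\nabla f|^2=2\langle A^2x,x\rangle$. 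On the right-hand side, commutativity gives $\langle (A^2\Sigma^{-1}+A\Sigma^{-1}A)y,y\rangle=2\langle A^2x,x\rangle$. Hence \eqref{VarlogBM1} is exactly \eqref{EvenGaussianPoincare} for $f$ on $K'$. The integrability hypotheses hold because the Gaussian measure has moments of all orders.

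\emph{(ii)$\Rightarrow$(i), reduction to a line.} Let $u$ be a unit vector and complete it to an orthonormal basis, giving an orthogonal matrix $U$ with first column $u$. Take boxes $K_{R,\delta}=U\,\mathrm{diag}(R,\delta,\dots,\delta)B_\infty^n$. As $\delta\to0$, $\mu_{K_{R,\delta},\Sigma}$ converges weakly to the law of $su$, where $s$ has density proportional to $e^{-s^2/(2\sigma^2)}$ on $[-R,R]$ and $\sigma^{-2}=\langle Bu,u\rangle$ with $B=\Sigma^{-1}$. Letting then $R\to\infty$, $s$ becomes a centered Gaussian with variance $\sigma^2$. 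Both sides of \eqref{VarlogBM1} are polynomial moments and pass to these limits. Using $\operatorname{Var}(s^2)=2\sigma^4$ and $\mathbb E s^2=\sigma^2$, validity of (ii) would force
\[
F(u):=\langle Bu,u\rangle\big(\langle A^2Bu,u\rangle+\langle ABAu,u\rangle\big)-2\langle ABu,u\rangle^2\ \ge 0 \quad\text{for all } u.
\]

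\emph{(ii)$\Rightarrow$(i), main step: producing $u$ with $F(u)<0$.} Since $A=\mathrm{diag}(a_1,\dots,a_n)$ and $\Sigma$ commutes with $A$ if and only if $B$ does, non-commutativity gives indices $i\ne j$ with $a_i\ne a_j$ and $B_{ij}\ne0$. At $u=e_i$ all three forms reduce to multiples of $a_i^2B_{ii}^2$, and one finds $F(e_i)=0$. So we expand $F(e_i+\varepsilon e_j)$ to first order in $\varepsilon$, using the entries $(A^2B)_{kl}=a_k^2B_{kl}$, $(ABA)_{kl}=a_ka_lB_{kl}$ and $(AB)_{kl}=a_kB_{kl}$ (symmetrized in the quadratic forms). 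The linear coefficient is
\[
B_{ii}B_{ij}\big(4a_i^2+(a_i+a_j)^2-4a_i^2-4a_ia_j\big)=B_{ii}B_{ij}(a_i-a_j)^2\neq0,
\]
since $B_{ii}>0$. Choosing the sign of $\varepsilon$ opposite to this coefficient gives $F<0$ for small $|\varepsilon|$, after normalizing $u$. The two limits above are then undone by continuity: for $\delta$ small and $R$ large, the box $K_{R,\delta}$ violates \eqref{VarlogBM1}. Its axes are close to the coordinate axes, as announced in the abstract.

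I expect the main obstacle to be the careful justification of the double limit, namely weak convergence together with convergence of fourth moments uniformly enough. This is routine because all measures involved are supported in a fixed ball for fixed $R$, and have Gaussian tails uniformly in $R$. The first-order expansion is the conceptual core of the argument.
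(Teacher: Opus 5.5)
Your proof is correct and follows essentially the same route as the paper. For (i)$\Rightarrow$(ii) you use the symmetry-improved Gaussian Poincar\'e inequality after the change of variables $y=\Sigma^{1/2}x$; for (ii)$\Rightarrow$(i) you use a thin-box limit to reach $2\beta(u)^2\le\alpha(u)\gamma(u)$, then the first-order expansion at $e_i+\varepsilon e_j$ with linear coefficient $s_{ii}s_{ij}(a_i-a_j)^2$.

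The only difference is that you take an iterated limit ($\delta\to0$ at fixed $R$, then $R\to\infty$, so $\delta$ must be chosen after $R$), whereas the paper uses the single-parameter boxes $[-1/\varepsilon,1/\varepsilon]\times[-\varepsilon,\varepsilon]^{n-1}$ treated in Appendix~B. Your version is if anything slightly easier to justify, since for fixed $R$ all the measures have compact support.
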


The proof of Theorem~\ref{propdirectionalcommuting1} gives a simple way to construct counterexamples to \eqref{VarlogBM1} in the non-commuting case. Namely, given $A$ and $\Sigma$ which do not commute, one can always find an origin-symmetric orthogonal box $K$ for which \eqref{VarlogBM1} fails. The boxes obtained by this construction are not coordinate boxes; a counterexample with a coordinate box would give a negative answer to the log-Brunn--Minkowski conjecture. Nevertheless, the construction yields orthogonal boxes arbitrarily close to coordinate boxes. Cordero-Erausquin and Rotem \cite[Theorem~6]{CR20} used a thin-box argument and a second-variation computation to show that the strong $(B)$-property fails for Gaussian measures with covariance matrices arbitrarily close to the identity. In the present setting, this idea leads from \eqref{VarlogBM1} to a directional inequality, and requiring this inequality in every direction is equivalent to the commutativity of $A$ and $\Sigma$.

\vspace{0.5cm} 

Theorem~\ref{propdirectionalcommuting1} shows that commutativity is the natural condition guaranteeing \eqref{VarlogBM1} over all origin-symmetric orthogonal boxes. In the non-commuting case, the geometry of $K$ becomes important. Although the inequality may fail for some orthogonal boxes, additional symmetries of the body with respect to an eigenbasis of $\Sigma$ can still guarantee its validity. The next theorem shows that unconditionality is sufficient when $\Sigma$ is diagonal. 

Our result in the non-commuting setting is the following.
\begin{theorem}\label{unconditionalK}
Inequality \eqref{VarlogBM1} holds for any unconditional convex body $K$, any $\Sigma\in\mathcal{D}_{\succ0}$, and any $A\in\mathcal{S}$.
\end{theorem}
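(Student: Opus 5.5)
The plan is to exploit the reflection symmetries of $\mu_{K,\Sigma}$. Since $\Sigma=\operatorname{diag}(\sigma_1,\dots,\sigma_n)$ and $K$ is unconditional, $\mu=\mu_{K,\Sigma}$ is invariant under every coordinate reflection $y_i\mapsto -y_i$. Write $A=(a_{ij})$. Symmetrizing the quadratic form gives
\begin{equation*}
f(y):=\langle A\Sigma^{-1}y,y\rangle=\sum_k \frac{a_{kk}}{\sigma_k}y_k^2+\sum_{i<j}c_{ij}\,y_iy_j,\qquad c_{ij}=a_{ij}\Big(\frac1{\sigma_i}+\frac1{\sigma_j}\Big).
\end{equation*}
Call the first sum $f_D$. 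The diagonal part $f_D$ is invariant under all reflections. Each monomial $y_iy_j$ with $i<j$ is odd under the reflection of $y_i$. Hence all cross covariances vanish: between $f_D$ and $y_iy_j$, and between $y_iy_j$ and $y_ky_l$ for $\{i,j\}\neq\{k,l\}$. Moreover $\int y_iy_j\,d\mu=0$. Therefore
\begin{equation*}
\operatorname{Var}_\mu(f)=\operatorname{Var}_\mu(f_D)+\sum_{i<j}c_{ij}^2\int y_i^2y_j^2\,d\mu .
\end{equation*}

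On the right-hand side of \eqref{VarlogBM1}, the off-diagonal entries of the matrix integrate to zero for the same reason, so only diagonal entries survive. The $k$-th diagonal entry of $A^2\Sigma^{-1}+A\Sigma^{-1}A$ is $\sum_j a_{kj}^2(1/\sigma_k+1/\sigma_j)$. The right-hand side thus splits as
\begin{equation*}
\sum_k \frac{2a_{kk}^2}{\sigma_k}\int y_k^2\,d\mu+\sum_{i<j}a_{ij}^2\Big(\frac1{\sigma_i}+\frac1{\sigma_j}\Big)\Big(\int y_i^2\,d\mu+\int y_j^2\,d\mu\Big).
\end{equation*}
The first sum is exactly the right-hand side of \eqref{VarlogBM1} for the diagonal matrix $A_D=\operatorname{diag}(a_{kk})$. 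Since $A_D$ and $\Sigma$ commute, $\operatorname{Var}_\mu(f_D)$ is bounded by it. This follows from the commuting case, i.e.\ from \eqref{EvenGaussianPoincare} after the change of variables $x=\Sigma^{-1/2}y$, exactly as in Theorem~\ref{propdirectionalcommuting1}. It therefore remains to prove, for each pair $i<j$,
\begin{equation*}
\int y_i^2y_j^2\,d\mu\le \frac{\sigma_i\sigma_j}{\sigma_i+\sigma_j}\Big(\int y_i^2\,d\mu+\int y_j^2\,d\mu\Big).
\end{equation*}

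The key step is a conditional one-dimensional bound. Fix all coordinates except $y_i$. By unconditionality and convexity, the fibre of $K$ in the $y_i$-direction is a symmetric interval $[-r,r]$. Since $\Sigma$ is diagonal, the conditional law of $y_i$ is $N(0,\sigma_i)$ restricted to $[-r,r]$. Its second moment is at most $\sigma_i$; this is the standard fact that $t\mapsto\int_{-r}^r s^2e^{-s^2/2\sigma_i}ds\big/\int_{-r}^r e^{-s^2/2\sigma_i}ds$ is increasing in $r$, or alternatively it follows from \eqref{EvenGaussianPoincare} in one dimension. Hence
\begin{equation*}
\int y_i^2y_j^2\,d\mu\le\sigma_i\int y_j^2\,d\mu,\qquad \int y_i^2y_j^2\,d\mu\le\sigma_j\int y_i^2\,d\mu .
\end{equation*}
Taking the convex combination of these two bounds with weights $\sigma_j/(\sigma_i+\sigma_j)$ and $\sigma_i/(\sigma_i+\sigma_j)$ gives exactly the required pairwise inequality. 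Summing over pairs completes the argument.

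The main point needing care is the bookkeeping of the symmetrization: $A\Sigma^{-1}$ is not symmetric, so one must check that the coefficients $c_{ij}$ and the diagonal entries above are computed correctly. One must also justify integrability and the restriction to bodies (or convex sets) where the conditional fibres are intervals. Both are routine.
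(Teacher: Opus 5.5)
Your proposal is correct and is essentially the paper's proof. It uses the same split into the diagonal part $f_D$ and the cross terms $c_{ij}y_iy_j$, the same cancellation of all mixed covariances by coordinate reflections, and the symmetry-improved Poincar\'e inequality for $f_D$. Your pairwise bounds $\int y_i^2y_j^2\,d\mu\le\sigma_i\int y_j^2\,d\mu$, combined with your weights, are exactly the paper's step of adding $p_i\int y_i^2y_j^2\,d\mu\le m_j$ and $p_j\int y_i^2y_j^2\,d\mu\le m_i$.

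The only difference is cosmetic. You prove the pairwise bound by Fubini and a one-dimensional integration by parts on the symmetric fibres, while the paper runs the same integration by parts through the Gauss--Green theorem on $\partial K$. Drop your alternative justification via \eqref{EvenGaussianPoincare} in one dimension: it does not give $\int s^2\,d\mu\le\sigma_i$, because the natural test function $s$ is odd. Your monotonicity/integration-by-parts argument is what carries that step.
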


As a consequence of Theorem~\ref{unconditionalK}, \eqref{VarlogBM1} holds for $K=B_2^n$ and arbitrary $\Sigma\in\mathcal{S}_{\succ0}$, since the Euclidean ball is unconditional with respect to every orthonormal basis.

\vspace{0.5cm}

The remainder of the paper is devoted to the proofs of these results.

\section{Proof of Proposition~\ref{PrositionEquivalence}}\label{PrositionEquivalence11}

Saroglou \cite{Saroglou14} (see also \cite{CR20}) proved that the log-Brunn--Minkowski conjecture is equivalent to the following statement: for every symmetric positive-definite matrix $\Sigma\in\mathcal{S}_{\succ0}$ and every diagonal matrix $A\in\mathcal{D}$, the function
\begin{equation}\label{mainFun}
t\longmapsto\gamma_{\Sigma}\left(e^{tA}B_\infty^n\right)
\end{equation}
is log-concave. We briefly explain this equivalence. By \cite[Theorem~3.1, p.~6]{Saroglou14}, the log-Brunn--Minkowski conjecture implies the corresponding inequality for Gaussian measure. Hence,
\cite[Theorem~5.1(iii), p.~15]{Saroglou14} implies that
\begin{equation*}
\R\ni t\longmapsto \int_{e^{tA}B_\infty^n}\varphi_n(Tx)\,dx
\end{equation*}
is log-concave for every $T\in GL_n$ and every $A\in\mathcal D$, where $\varphi_n$ denotes the density of the standard Gaussian measure $\gamma_n$. Using the identity
\begin{equation}\label{GaussianLinearImageIdentity}
\int_E\varphi_n(Tx)\,dx = \frac{1}{|\det T|}\gamma_{(T^T T)^{-1}}(E),
\end{equation}
we obtain the log-concavity of \eqref{mainFun} for every $\Sigma\in\mathcal S_{\succ0}$, since every such $\Sigma$ can be written as $\Sigma=(T^T T)^{-1}$ for some $T\in GL_n$. Conversely, the log-concavity of \eqref{mainFun}, together with \eqref{GaussianLinearImageIdentity}, gives the hypothesis of \cite[Theorem~5.1(i), p.~14]{Saroglou14}, and therefore yields the log-Brunn--Minkowski conjecture in every dimension.

\vspace{0.5cm} 

We will show that taking the second variation of \eqref{mainFun} leads to the variance inequality \eqref{VarlogBM1}. For diagonal $A$, \eqref{VarlogBM1} is the second-variation condition associated with the strong $(B)$-property studied in \cite{CR20}; see also \cite{CR23,Put26}.

\begin{lemma}\label{ComputationSecondDerivative}
Let $K\subseteq\R^n$, $\Sigma\in\mathcal{S}_{\succ0}$, and $A\in\mathcal{S}$. Consider the function
\begin{equation*}
g(t) = \int_{K}e^{-\frac12\left\langle e^{tA}\Sigma^{-1}e^{tA}y,y	\right\rangle} \,dy.
\end{equation*}
Then
\begin{equation}
(\log g)''(0) = \operatorname{Var}_{\mu_{K,\Sigma}} \left( \left\langle A\Sigma^{-1}y,y\right\rangle \right)-\int_{\R^n}\left\langle(A^2\Sigma^{-1}+A\Sigma^{-1}A)y,y\right\rangle\,d\mu_{K,\Sigma}(y).
\end{equation}
\end{lemma}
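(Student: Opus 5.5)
Write $Q_t=e^{tA}\Sigma^{-1}e^{tA}$ and $\phi(t,y)=\tfrac12\langle Q_t y,y\rangle$, so that $g(t)=\int_K e^{-\phi(t,y)}\,dy$. The plan is to differentiate under the integral sign twice. We then use the standard identity for the logarithm of a partition function:
\begin{equation*}
(\log g)''(0)=\operatorname{Var}_{\nu}\bigl(\partial_t\phi(0,\cdot)\bigr)-\int \partial_t^2\phi(0,y)\,d\nu(y),
\end{equation*}
where $d\nu=\mathbf 1_K e^{-\phi(0,y)}dy/g(0)$. Since $\phi(0,y)=\tfrac12\langle\Sigma^{-1}y,y\rangle$, the measure $\nu$ is exactly $\mu_{K,\Sigma}$; the Gaussian normalizing constant cancels in the ratio. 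The identity itself comes from $g'/g=-\mathbb E_\nu[\partial_t\phi]$ and $g''/g=\mathbb E_\nu[(\partial_t\phi)^2-\partial_t^2\phi]$, so that $(\log g)''=g''/g-(g'/g)^2$ gives the displayed formula.

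Differentiation under the integral is justified on a neighbourhood $|t|\le 1$ of $0$. There $Q_t\succeq c\,I$ for some $c>0$ by continuity and positive definiteness. The $t$-derivatives of $\phi$ are quadratic forms in $y$ with uniformly bounded coefficients. Hence $e^{-\phi}$ and its first two $t$-derivatives are dominated by $C(1+|y|^4)e^{-c|y|^2/2}$, which is integrable over $\R^n$ and therefore over $K$. This domination is the only analytic point, and it needs no convexity or symmetry of $K$, only measurability. We take $K$ to have positive measure so that $g(0)>0$.

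The remaining step is to compute the derivatives of $Q_t$ at $0$. Since $\frac{d}{dt}e^{tA}=Ae^{tA}=e^{tA}A$, we have $Q_t'=AQ_t+Q_tA$ and $Q_t''=A^2Q_t+2AQ_tA+Q_tA^2$. At $t=0$ these give
\begin{equation*}
Q_0'=A\Sigma^{-1}+\Sigma^{-1}A,\qquad Q_0''=A^2\Sigma^{-1}+2A\Sigma^{-1}A+\Sigma^{-1}A^2 .
\end{equation*}
Only the symmetric parts of these matrices enter the quadratic forms. Since $A$ and $\Sigma^{-1}$ are symmetric, $\langle \Sigma^{-1}Ay,y\rangle=\langle A\Sigma^{-1}y,y\rangle$ and $\langle\Sigma^{-1}A^2y,y\rangle=\langle A^2\Sigma^{-1}y,y\rangle$. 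Therefore
\begin{equation*}
\partial_t\phi(0,y)=\langle A\Sigma^{-1}y,y\rangle,\qquad \partial_t^2\phi(0,y)=\langle (A^2\Sigma^{-1}+A\Sigma^{-1}A)y,y\rangle.
\end{equation*}
Substituting these into the log-partition identity gives exactly the formula in Lemma~\ref{ComputationSecondDerivative}.

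No step is a real obstacle. The only place requiring care is the bookkeeping of factors of $\tfrac12$ and $2$: it relies on the symmetrization $\langle Ay,y\rangle=\langle A^Ty,y\rangle$, which turns $\tfrac12(A\Sigma^{-1}+\Sigma^{-1}A)$ into a single $A\Sigma^{-1}$ and $\tfrac12 Q_0''$ into $A^2\Sigma^{-1}+A\Sigma^{-1}A$.
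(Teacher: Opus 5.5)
Your proposal is correct and follows essentially the same route as the paper. Both arguments differentiate $g$ twice under the integral sign, evaluate at $t=0$, and apply $(\log g)''=g''/g-(g'/g)^2$, using the symmetry of $A$ and $\Sigma^{-1}$ to collapse the quadratic forms. Your dominated-convergence justification and your remark that $K$ must have positive measure are details the paper leaves implicit.
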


\begin{proof}
Differentiating $g$, we obtain
\begin{align*}
g'(t)
&=-\frac{1}{2}\int_{K}\left\langle\frac{d}{dt}\left(e^{tA}\Sigma^{-1}e^{tA}\right)y,y\right\rangle e^{-\frac12 \left\langle e^{tA}\Sigma^{-1}e^{tA}y,y\right\rangle} \,dy\\
&=-\frac{1}{2}\int_{K}\left\langle\left(Ae^{tA}\Sigma^{-1}e^{tA}+e^{tA}\Sigma^{-1}e^{tA}A\right)y,y\right\rangle e^{-\frac{1}{2}\left\langle e^{tA}\Sigma^{-1}e^{tA}y,y \right\rangle} \,dy\\
&=-\int_{K} \left\langle Ae^{tA}\Sigma^{-1}e^{tA}y,y \right\rangle e^{-\frac12 \left\langle e^{tA}\Sigma^{-1}e^{tA}y,y \right\rangle} \,dy.
\end{align*}
For the last equality, we used the fact that both $A$ and $e^{tA}\Sigma^{-1}e^{tA}$ are symmetric.

Differentiating once more, we obtain
\begin{align*}
g''(t)
&=\int_{K}\Big[-\left\langle\left(A^2e^{tA}\Sigma^{-1}e^{tA}+Ae^{tA}\Sigma^{-1}e^{tA}A \right)y,y \right\rangle\\
&\hspace{3cm}+\left\langle Ae^{tA}\Sigma^{-1}e^{tA}y,y \right\rangle^2 \Big] e^{-\frac12 \left\langle e^{tA}\Sigma^{-1}e^{tA}y,y \right\rangle }\,dy.
\end{align*}
In particular,
\begin{align*}
g'(0)=-\int_{K}\left\langle A\Sigma^{-1}y,y\right\rangle e^{-\frac12\langle\Sigma^{-1}y,y\rangle} \,dy,
\end{align*}
and
\begin{align*}
g''(0)=\int_{K}\Big[-\left\langle A^2\Sigma^{-1}y,y\right\rangle-\left\langle A\Sigma^{-1}Ay,y\right\rangle+\left\langle A\Sigma^{-1}y,y\right\rangle^2\Big]e^{-\frac12\langle\Sigma^{-1}y,y\rangle}\,dy.
\end{align*}
Since $	(\log g)''(0)=g''(0)/g(0)-g'(0)^2/g(0)^2$, the Lemma follows from the definition of $\mu_{K,\Sigma}$.
\end{proof}

\begin{proof}[Proof of Proposition~\ref{PrositionEquivalence}]
By Saroglou's result \cite{Saroglou14}, the log-Brunn--Minkowski conjecture is equivalent to the log-concavity of \eqref{mainFun} for every $\Sigma\in\mathcal{S}_{\succ0}$ and every $A\in\mathcal{D}$. We show that this is equivalent to \eqref{VarlogBM1} for $K=B_\infty^n$, every $\Sigma\in\mathcal{S}_{\succ0}$, and every $A\in\mathcal{D}$.

Let $F(t)=\gamma_{\Sigma}\left(e^{tA}B_\infty^n\right)$. The log-concavity of the function in \eqref{mainFun} is equivalent to $(\log F)''(t)\leq 0$ for every $t\in\R$. We claim that it is enough to verify this inequality at $t=0$ for every $\Sigma\in\mathcal{S}_{\succ0}$. Indeed, fix $t_0\in\R$ and set
\begin{equation*}
\Sigma_{t_0}=e^{-t_0A}\Sigma e^{-t_0A}.
\end{equation*}
Since $\Sigma$ is symmetric and positive definite, so is $\Sigma_{t_0}$. If $X\sim N(0,\Sigma)$, then
\begin{equation*}
e^{-t_0A}X\sim N(0,\Sigma_{t_0}).
\end{equation*}
Consequently, for every $s\in\R$,
\begin{align*}
\gamma_{\Sigma}\left(e^{(t_0+s)A}B_\infty^n\right)
&=\mathbb{P}\left(X\in e^{(t_0+s)A}B_\infty^n\right)\\
&=\mathbb{P}\left(e^{-t_0A}X\in e^{sA}B_\infty^n\right)\\
&=\gamma_{\Sigma_{t_0}}\left(e^{sA}B_\infty^n\right).
\end{align*}
Thus, replacing $t$ by $t_0+s$ corresponds to replacing $\Sigma$ by $\Sigma_{t_0}$. Therefore, the log-concavity condition at $t_0$ for $\Sigma$ is the same as the log-concavity condition at $0$ for
$\Sigma_{t_0}$. Since $t_0$ is arbitrary and $\Sigma_{t_0}\in\mathcal{S}_{\succ0}$, it is enough to verify $(\log F)''(0)\leq 0$ for every $\Sigma\in\mathcal{S}_{\succ0}$ and every $A\in\mathcal{D}$.

Applying the change of variables $x=e^{tA}y$, we obtain
\begin{align*}
F(t)=\gamma_{\Sigma}(e^{tA}B_\infty^n)
&=\frac{1}{c_{\Sigma}}\int_{e^{tA}B_\infty^n}e^{-\frac12\langle\Sigma^{-1}x,x\rangle}\,dx\\
&=\frac{e^{t\operatorname{tr}(A)}}{c_{\Sigma}}\int_{B_\infty^n}e^{-\frac12	\left\langle e^{tA}\Sigma^{-1}e^{tA}y,y\right\rangle}\,dy\\
&=\frac{e^{t\operatorname{tr}(A)}}{c_{\Sigma}}g(t),
\end{align*}
where $g$ is the function in Lemma~\ref{ComputationSecondDerivative} with $K=B_\infty^n$ and $c_{\Sigma}$ is the normalizing constant. Since
\begin{equation*}
\log F(t)=t\operatorname{tr}(A)-\log c_{\Sigma}+\log g(t),
\end{equation*}
we have $(\log F)''(0)=(\log g)''(0)$. By Lemma~\ref{ComputationSecondDerivative}, the condition $(\log F)''(0)\leq 0$ is therefore equivalent to
\begin{align*}
\operatorname{Var}_{\mu_{B_\infty^n,\Sigma}}\left(\left\langle A\Sigma^{-1}y,y\right\rangle\right)\leq\int_{\R^n}\left\langle(A^2\Sigma^{-1}+A\Sigma^{-1}A)y,y\right\rangle\,d\mu_{B_\infty^n,\Sigma}(y),
\end{align*}
which is \eqref{VarlogBM1} for $K=B_\infty^n$. This proves the equivalence.
\end{proof}

\section{The Commuting Case}\label{SigmaEvenPoincare}

We now prove Theorem~\ref{propdirectionalcommuting1}. We first extend the symmetry-improved Gaussian Poincar\'e inequality \eqref{EvenGaussianPoincare} to Gaussian measures with
arbitrary covariance matrices.

\begin{lemma}
Let $K\subseteq\R^n$ be an origin-symmetric convex set and let $\Sigma\in\mathcal S_{\succ0}$. Then, for every smooth even function $f:\R^n\to\R$ such that $f,\nabla f\in L^2(\mu_{K,\Sigma})$, it holds
\begin{equation}\label{EvenGaussianPoincareSigma1}
\operatorname{Var}_{\mu_{K,\Sigma}}(f) \leq \frac{1}{2} \int_{\R^n} \langle \Sigma\nabla f(x),\nabla f(x)\rangle \,d\mu_{K,\Sigma}(x).
\end{equation}
\end{lemma}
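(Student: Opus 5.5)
The plan is to reduce to the standard case \eqref{EvenGaussianPoincare} by a linear change of variables that turns $\gamma_\Sigma$ into $\gamma_n$. Let $T=\Sigma^{1/2}$, the symmetric positive-definite square root, and set $L=T^{-1}K$. Then $L$ is again an origin-symmetric convex set, since $T^{-1}$ is linear and invertible. If $X\sim N(0,\Sigma)$, then $T^{-1}X\sim N(0,I_n)$. Hence the push-forward of $\mu_{K,\Sigma}$ under $x\mapsto T^{-1}x$ is exactly $\mu_{L,I_n}$. One can see this directly: for Borel $E$,
\begin{equation*}
\mu_{K,\Sigma}(TE)=\frac{\gamma_\Sigma(K\cap TE)}{\gamma_\Sigma(K)}=\frac{\gamma_n(L\cap E)}{\gamma_n(L)}=\mu_{L,I_n}(E).
\end{equation*}
Equivalently, the density identity \eqref{GaussianLinearImageIdentity} with $T^{-1}$ in place of $T$ shows this, and the normalizing constants cancel.

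Given a smooth even $f$ with $f,\nabla f\in L^2(\mu_{K,\Sigma})$, define $h(z)=f(Tz)$. Then $h$ is smooth and even, and $\nabla h(z)=T\nabla f(Tz)$, using $T^T=T$. By the push-forward identity,
\begin{equation*}
\operatorname{Var}_{\mu_{L,I_n}}(h)=\operatorname{Var}_{\mu_{K,\Sigma}}(f)
\end{equation*}
and
\begin{equation*}
\int|\nabla h|^2\,d\mu_{L,I_n}=\int\langle T^2\nabla f(x),\nabla f(x)\rangle\,d\mu_{K,\Sigma}(x)=\int\langle\Sigma\nabla f,\nabla f\rangle\,d\mu_{K,\Sigma}.
\end{equation*}
These identities also show that $h,\nabla h\in L^2(\mu_{L,I_n})$. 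Indeed, $|\nabla h|^2\le\|\Sigma\|\,|\nabla f\circ T|^2$, and the right-hand side is integrable by the hypothesis on $f$ after the change of variables. So the integrability hypotheses of \eqref{EvenGaussianPoincare} are satisfied.

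Applying \eqref{EvenGaussianPoincare} to $h$ on $L$ and substituting the two identities gives \eqref{EvenGaussianPoincareSigma1}. No step here is a genuine obstacle. The only points needing care are the following:
\begin{enumerate}[(a)]
\item getting the transpose right in the chain rule, so that the quadratic form is $\Sigma=TT^T$ and not $\Sigma^{-1}$;
\item checking that the push-forward of the restricted measure is the restricted measure of the image set, with the normalizations matching;
\item noting that origin-symmetry and convexity of $K$, and evenness of $f$, are preserved under the linear map.
\end{enumerate}
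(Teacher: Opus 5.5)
Your proposal is correct and follows essentially the same route as the paper. The paper also sets $\widetilde K=\Sigma^{-1/2}K$ and $g(z)=f(\Sigma^{1/2}z)$, uses $\nabla g(z)=\Sigma^{1/2}\nabla f(\Sigma^{1/2}z)$ together with the change-of-variables identity for $\mu_{K,\Sigma}$, and then applies \eqref{EvenGaussianPoincare}. Your explicit check that $h,\nabla h\in L^2(\mu_{L,I_n})$ is a small detail the paper leaves implicit.
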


\begin{proof}
Set $\widetilde K:=\Sigma^{-1/2}K$. Since $K$ is origin-symmetric and convex, so is $\widetilde K$. Making the change of variables $x=\Sigma^{1/2}z$, we have $\langle \Sigma^{-1}x,x\rangle=|z|^2$ and $dx=(\det\Sigma)^{1/2}\,dz$. Hence $\gamma_{\Sigma}(K)=\gamma_n(\widetilde K)$, and, for every integrable function $h:\R^n\to\R$,
\begin{equation}\label{changevariablesrestgaussian}
\int_{\R^n}h(x)\,d\mu_{K,\Sigma}(x) = \int_{\R^n} h(\Sigma^{1/2}z)\,d\mu_{\widetilde K,I_n}(z).
\end{equation}
Next, set $g(z):=f(\Sigma^{1/2}z)$. Since $f$ is even, so is $g$. By \eqref{changevariablesrestgaussian} and \eqref{EvenGaussianPoincare} we have
\begin{align*}
\operatorname{Var}_{\mu_{K,\Sigma}}(f)
&=\operatorname{Var}_{\mu_{\widetilde K,I_n}}(g)\\
&\leq\frac12 \int_{\R^n} |\nabla g(z)|^2 \,d\mu_{\widetilde K,I_n}(z)\\
&=\frac12 \int_{\R^n} \left\langle \Sigma\nabla f(\Sigma^{1/2}z), \nabla f(\Sigma^{1/2}z) \right\rangle \,d\mu_{\widetilde K,I_n}(z)\\
&=\frac12 \int_{\R^n} \left\langle \Sigma\nabla f(x), \nabla f(x) \right\rangle \,d\mu_{K,\Sigma}(x),
\end{align*}
where $\nabla g(z)=\Sigma^{1/2}\nabla f(\Sigma^{1/2}z)$. This proves the lemma.
\end{proof}

We can now obtain \eqref{VarlogBM1} in the commuting case.

\begin{lemma}\label{CommutativeCase}
Let $\Sigma\in\mathcal S_{\succ0}$ and $A\in\mathcal S$. If $\Sigma$ and $A$ commute, then \eqref{VarlogBM1} holds for every origin-symmetric convex set $K\subseteq\R^n$.
\end{lemma}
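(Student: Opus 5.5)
We apply the Lemma above, inequality \eqref{EvenGaussianPoincareSigma1}, to the even quadratic function
\begin{equation*}
f(y)=\left\langle A\Sigma^{-1}y,y\right\rangle .
\end{equation*}
The integrability hypotheses hold trivially: $f$ is a polynomial and $\mu_{K,\Sigma}$ has Gaussian tails, so $f,\nabla f\in L^2(\mu_{K,\Sigma})$. If $\gamma_\Sigma(K)=0$, the measure is not defined; for a convex set this means $K$ is lower dimensional. We either exclude this case, or handle it by the same argument on the subspace spanned by $K$ (or by approximation).

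Since $\Sigma$ and $A$ commute, $A\Sigma^{-1}$ is symmetric, because $(A\Sigma^{-1})^T=\Sigma^{-1}A=A\Sigma^{-1}$. Hence
\begin{equation*}
\nabla f(y)=2A\Sigma^{-1}y .
\end{equation*}
Plugging this into \eqref{EvenGaussianPoincareSigma1} gives
\begin{equation*}
\operatorname{Var}_{\mu_{K,\Sigma}}(f)\le \frac12\int 4\left\langle \Sigma A\Sigma^{-1}y,A\Sigma^{-1}y\right\rangle d\mu_{K,\Sigma}
=2\int \left\langle \Sigma^{-1}A\Sigma A\Sigma^{-1}y,y\right\rangle d\mu_{K,\Sigma}.
\end{equation*}
Using commutativity once more, $\Sigma^{-1}A\Sigma A\Sigma^{-1}=A^2\Sigma^{-1}$. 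The right-hand side is therefore $2\int\langle A^2\Sigma^{-1}y,y\rangle\,d\mu_{K,\Sigma}$.

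It remains to match this with the right-hand side of \eqref{VarlogBM1}. By commutativity, $A\Sigma^{-1}A=A^2\Sigma^{-1}$, so
\begin{equation*}
\left\langle(A^2\Sigma^{-1}+A\Sigma^{-1}A)y,y\right\rangle=2\left\langle A^2\Sigma^{-1}y,y\right\rangle,
\end{equation*}
and the two sides coincide exactly, which proves \eqref{VarlogBM1}.

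There is no real obstacle in this argument. The only points needing care are the matrix identities that use commutativity, and the observation that the gradient is $2A\Sigma^{-1}y$ rather than $(A\Sigma^{-1}+\Sigma^{-1}A)y$. In general the gradient is the latter, and the two agree only when $A\Sigma^{-1}$ is symmetric, which is exactly where commutativity enters.
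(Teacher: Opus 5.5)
Your proposal is correct and is essentially the paper's own proof. Both apply \eqref{EvenGaussianPoincareSigma1} to the even quadratic $f(y)=\langle A\Sigma^{-1}y,y\rangle$, use commutativity to get $\nabla f(y)=2A\Sigma^{-1}y$, and use it again to reduce $\frac12\langle\Sigma\nabla f,\nabla f\rangle$ to $2\langle A^2\Sigma^{-1}y,y\rangle=\langle(A^2\Sigma^{-1}+A\Sigma^{-1}A)y,y\rangle$; your aside about $\gamma_\Sigma(K)=0$ is unnecessary, since the statement tacitly presupposes that $\mu_{K,\Sigma}$ is defined.
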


\begin{proof}
We apply \eqref{EvenGaussianPoincareSigma1} to the even function $f(y)=\left\langle A\Sigma^{-1}y,y\right\rangle$. Since $A$ and $\Sigma$ commute, $A\Sigma^{-1}$ is symmetric, and hence $\nabla f(y)=2A\Sigma^{-1}y$. Therefore,
\begin{align*}
\frac12 \left\langle \Sigma\nabla f(y),\nabla f(y) \right\rangle
&=2\left\langle\Sigma A\Sigma^{-1}y,A\Sigma^{-1}y\right\rangle\\
&=2\left\langle A^2\Sigma^{-1}y,y \right\rangle\\
&=\left\langle \left( A^2\Sigma^{-1}+A\Sigma^{-1}A \right)y,y \right\rangle,
\end{align*}
where in the last equality we again used that $A$ and $\Sigma$ commute. Since $f$ is quadratic, $f,\nabla f\in L^2(\mu_{K,\Sigma})$, and thus \eqref{EvenGaussianPoincareSigma1} yields \eqref{VarlogBM1}.
\end{proof}

The quantities $\alpha,\beta,\gamma$ below arise from the one-dimensional thin-box limit of \eqref{VarlogBM1}. This leads to the following characterization of the commutativity of $A$ and $\Sigma$.

\begin{theorem}\label{directionalInequality}
Let $\Sigma\in\mathcal S_{\succ0}$ and $A\in\mathcal D$. For $u\in\R^n$ set
\begin{align*}
\alpha(u) &:=\left\langle \Sigma^{-1}u,u\right\rangle,\\
\beta(u)  &:=\left\langle A\Sigma^{-1}u,u\right\rangle,\\
\gamma(u) &:=\left\langle \left( A^2\Sigma^{-1}+A\Sigma^{-1}A \right)u,u \right\rangle.
\end{align*}
Then the following are equivalent:
\begin{enumerate}[(i)]
\item $A$ and $\Sigma$ commute.
\item $2\beta(u)^2\leq\alpha(u)\gamma(u)$ for every $u\in\R^n$.
\item Inequality \eqref{VarlogBM1} holds for every origin-symmetric orthogonal box $K\subseteq\R^n$.
\end{enumerate}
\end{theorem}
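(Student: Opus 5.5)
The plan is to prove the cycle (i)$\Rightarrow$(iii)$\Rightarrow$(ii)$\Rightarrow$(i).

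\textbf{(i)$\Rightarrow$(iii).} This is immediate from Lemma~\ref{CommutativeCase}, since every origin-symmetric orthogonal box is an origin-symmetric convex set.

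\textbf{(iii)$\Rightarrow$(ii).} I would use a thin-box limit. By homogeneity of $\alpha,\beta,\gamma$ we may assume $|u|=1$. Complete $u$ to an orthonormal basis $u,v_2,\dots,v_n$. For $L>0$ and $\varepsilon>0$, consider the orthogonal box
\begin{equation*}
K_{L,\varepsilon}=\{su+\textstyle\sum_k t_kv_k:\ |s|\le L,\ |t_k|\le\varepsilon\}.
\end{equation*}
As $\varepsilon\to0$, every $y\in K_{L,\varepsilon}$ satisfies $y=su+O(\varepsilon)$. The Gaussian density is continuous, so it is uniformly close to $e^{-\alpha(u)s^2/2}$ on the box. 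Hence $\mu_{K_{L,\varepsilon},\Sigma}$ converges weakly, with uniformly bounded supports, to the law of $su$, where $s$ has density proportional to $e^{-\alpha s^2/2}\mathbf 1_{[-L,L]}(s)$. Both sides of \eqref{VarlogBM1} are polynomials in $y$, so we may pass to the limit. This gives
\begin{equation*}
\beta(u)^2\operatorname{Var}(s^2)\le\gamma(u)\,\mathbb E s^2 .
\end{equation*}
Next let $L\to\infty$, again by dominated convergence. The law of $s$ tends to $N(0,1/\alpha)$, for which $\operatorname{Var}(s^2)=2/\alpha^2$ and $\mathbb E s^2=1/\alpha$. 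We obtain $2\beta^2/\alpha^2\le\gamma/\alpha$, i.e. $2\beta(u)^2\le\alpha(u)\gamma(u)$.

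The only delicate point in this step is justifying the two limits, and both are routine because the integrands are polynomial and the densities are uniformly controlled. Note that short boxes are useless here: the uniform limit makes $\operatorname{Var}(s^2)/\mathbb E s^2\to0$. Letting $L\to\infty$ is exactly what produces the sharp constant $2$.

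\textbf{(ii)$\Rightarrow$(i).} This is the heart of the proof. Write $P=\Sigma^{-1}$ and $A=\operatorname{diag}(a_1,\dots,a_n)$. Then $A$ and $\Sigma$ commute if and only if $P_{ij}=0$ whenever $a_i\neq a_j$. Suppose instead that $r:=P_{ij}\ne0$ with $a_i\ne a_j$, and set $p=P_{ii}>0$, $q=P_{jj}$. Test (ii) with $u=e_i+te_j$. Using symmetrized quadratic forms one computes
\begin{align*}
\alpha&=p+2tr+t^2q,\\
\beta&=a_ip+tr(a_i+a_j)+t^2a_jq,\\
\gamma&=2a_i^2p+tr(a_i+a_j)^2+2t^2a_j^2q.
\end{align*}
At $t=0$ we have $2\beta^2-\alpha\gamma=0$. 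The coefficient of $t$ is
\begin{equation*}
4a_ipr(a_i+a_j)-pr(a_i+a_j)^2-4a_i^2pr=-pr(a_i-a_j)^2\neq0.
\end{equation*}
Choosing small $t$ with the sign of $-r$ makes $2\beta^2-\alpha\gamma>0$, which contradicts (ii). Hence $P$, and therefore $\Sigma$, commutes with $A$.

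I expect this first-order expansion to be the key computation, and it is short once the $t$-linear terms are collected. The counterexample boxes produced this way have long axis along $e_i+te_j$ with $t$ small, which matches the paper's remark that they can be taken arbitrarily close to coordinate boxes.
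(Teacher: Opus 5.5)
Your proposal is correct and follows the paper's proof essentially step for step. You use Lemma~\ref{CommutativeCase} for (i)$\Rightarrow$(iii), a thin-box limit producing $N(0,1/\alpha(u))$ for (iii)$\Rightarrow$(ii), and the first-order coefficient $-pr(a_i-a_j)^2$ of $2\beta^2-\alpha\gamma$ along $e_i+te_j$ for (ii)$\Rightarrow$(i); the paper phrases the last step as $F'(0)=0$ at the minimum $t=0$. The only variation is that you let $\varepsilon\to0$ and then $L\to\infty$ separately, instead of using the single box $[-1/\varepsilon,1/\varepsilon]\times[-\varepsilon,\varepsilon]^{n-1}$, so you work with uniformly bounded supports and avoid the exponential-growth estimates of Appendix~B.
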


\begin{proof} We prove the implications $(i)\Rightarrow(iii)\Rightarrow(ii)\Rightarrow(i)$.

$(i)\implies (iii)$ Follows from Lemma \ref{CommutativeCase}.

$(iii)\implies(ii)$
Fix $u\in S^{n-1}$. Since $\Sigma\in\mathcal S_{\succ0}$, we have $\alpha(u)>0$. Complete $u$ to an orthonormal basis $u,u_2,\ldots,u_n$. For $\varepsilon>0$, let
\begin{equation}\label{thinBOX}
K_\varepsilon =\left\{t u+\sum_{j=2}^n s_j u_j: |t|\leq\frac1\varepsilon,\quad |s_j|\leq\varepsilon \right\}.
\end{equation}
Thus, in the basis $u,u_2,\ldots,u_n$, the set $K_\varepsilon$ is the box $\left[-\frac{1}{\varepsilon},\frac{1}{\varepsilon}\right] \times[-\varepsilon,\varepsilon]^{n-1}$. For every $h\in C^1(\R^n)$ such that $h$ and $\nabla h$ have at most exponential growth, we have
\begin{equation}\label{Diverge}
\lim_{\varepsilon\to0} \int_{\R^n}h(x)\,d\mu_{K_\varepsilon,\Sigma}(x) =\int_{\R}h(tu)\,d\nu_{u}(t),
\end{equation}
where
\begin{equation}
\nu_{u}
=N\left(0,\frac1{\langle\Sigma^{-1}u,u\rangle}\right)
=N\left(0,\frac1{\alpha(u)}\right),
\end{equation}
(see Appendix~B for the precise statement and proof). Next, apply \eqref{VarlogBM1} to $K_\varepsilon$ and let $\varepsilon\to0$. Since $\beta(tu)=t^2\beta(u)$ and $\gamma(tu)=t^2\gamma(u)$, using \eqref{Diverge} for $\beta$, $\beta^2$, and $\gamma$, we obtain
\begin{equation}\label{1DIMlimiting}
\operatorname{Var}_{\nu_u}\bigl(\beta(u)t^2\bigr)
\leq
\int_{\R}\gamma(u)t^2\,d\nu_u(t).
\end{equation}
Since $\nu_u=N(0,\alpha(u)^{-1})$, we have
\begin{align*}
\int_{\R}t^2\,d\nu_u(t)&=\frac{1}{\alpha(u)},\\
\int_{\R}t^4\,d\nu_u(t)&=\frac{3}{\alpha(u)^2},
\end{align*}
Hence \eqref{1DIMlimiting} becomes $\frac{2\beta(u)^2}{\alpha(u)^2} \leq \frac{\gamma(u)}{\alpha(u)}$, which gives $(ii)$.

$(ii)\implies(i)$ Assume that $2\beta(u)^2\leq\alpha(u)\gamma(u)$
for every $u\in\R^n$, and write $\Sigma^{-1}=(s_{ij})_{i,j=1}^n,$ and $A=\operatorname{diag}(a_1,\ldots,a_n)$. To prove that $A$ and $\Sigma$ commute, it is enough to show that
\begin{equation*}
A\Sigma^{-1}-\Sigma^{-1}A
=\bigl((a_i-a_j)s_{ij}\bigr)_{i,j=1}^n
\end{equation*}
is the zero matrix. Its diagonal entries are automatically zero, and the $(i,j)$ entry is also zero whenever $a_i=a_j$. It therefore remains to consider indices $i\neq j$ such that $a_i\neq a_j$.

Fix such a pair $i,j$. Define the polynomial function
\begin{equation*}
F(t)=\alpha(e_i+t e_j)\gamma(e_i+t e_j)-2\beta(e_i+t e_j)^2.
\end{equation*}
By assumption, $F(t)\geq0$ for every $t\in\R$. Moreover, one can check that $F(0)=0$. Thus $t=0$ is a minimum of $F$, so $F'(0)=0$. Notice that
\begin{align*}
\alpha(e_i+te_j)
&=s_{ii}+2t s_{ij}+t^2s_{jj},\\
\beta(e_i+te_j)
&=a_i s_{ii}+t(a_i+a_j)s_{ij}+t^2a_j s_{jj},\\
\gamma(e_i+te_j)
&=2a_i^2s_{ii}+t(a_i+a_j)^2s_{ij}+2t^2a_j^2s_{jj}.
\end{align*}
Therefore,
\begin{align*}
F'(0)
&=\left.\frac{d}{dt}\alpha(e_i+te_j)\right|_{t=0}\gamma(e_i)+\alpha(e_i)\left.\frac{d}{dt}\gamma(e_i+te_j)\right|_{t=0} -4\beta(e_i)\left.\frac{d}{dt}\beta(e_i+te_j)\right|_{t=0}\\
&=2s_{ij}\cdot 2a_i^2s_{ii} +s_{ii}(a_i+a_j)^2s_{ij}-4a_is_{ii}(a_i+a_j)s_{ij}\\
&=s_{ii}s_{ij}\left(4a_i^2+(a_i+a_j)^2-4a_i(a_i+a_j)\right)\\
&=s_{ii}s_{ij}(a_i-a_j)^2.
\end{align*}
Since $a_i\neq a_j$ and $\Sigma^{-1}\succ0$ implies $s_{ii}>0$, it follows that $s_{ij}=0$. Thus $(a_i-a_j)s_{ij}=0$ for every $i,j$, and this proves $(i)$.
\end{proof}

\begin{remark}
We also give a direct proof of $(i)\Rightarrow(ii)$. Suppose that $A$ and $\Sigma$ commute. Since both matrices are symmetric, they are simultaneously diagonalizable in an orthonormal basis. Thus we may write $A=\operatorname{diag}(a_1,\ldots,a_n)$ and $\Sigma^{-1}=\operatorname{diag}(p_1,\ldots,p_n)$ with $p_i>0$. Then, for $u=(u_1,\ldots,u_n)$, the Cauchy--Schwarz inequality gives
\begin{align*}
2\beta(u)^2=2\left(\sum_{i=1}^n a_i p_i u_i^2\right)^2 \leq 2\left(\sum_{i=1}^n p_i u_i^2\right)\left(\sum_{i=1}^n a_i^2p_i u_i^2\right)=\alpha(u)\gamma(u).
\end{align*}
\end{remark}

\begin{remark}
Let $\Sigma\in\mathcal S_{\succ0}$ and $A\in\mathcal D$. Then the directional inequality is always an equality in each coordinate direction $e_i$. Indeed, writing $A=\operatorname{diag}(a_1,\ldots,a_n)$ and $\Sigma^{-1}=(s_{ij})_{i,j=1}^n,$ we have
\begin{equation*}
2\beta(e_i)^2=2a_i^2s_{ii}^2=\alpha(e_i)\gamma(e_i).
\end{equation*}
\end{remark}

\begin{remark}
The proof of Theorem~\ref{directionalInequality} also gives a simple way to construct counterexamples in the non-commuting case. If $A$ and $\Sigma$ do not commute, then the one-dimensional limiting inequality fails for some $u\in S^{n-1}$. Hence, for any sufficiently small $\varepsilon>0$, the thin box $K_\varepsilon$ defined in 
\eqref{thinBOX} yields a counterexample to \eqref{VarlogBM1}.
\end{remark}

\begin{remark}\label{closeTOaxis}
In the following two-dimensional example, arbitrarily small rotations of sufficiently thin rectangles yield counterexamples. Consider
\[
A=
\begin{pmatrix}
1&0\\
0&-1
\end{pmatrix},
\qquad \Sigma^{-1} =
\begin{pmatrix}
1&-1/2\\
-1/2&1
\end{pmatrix}.
\qquad
\]
For $u_\theta=(\cos\theta,\sin\theta)$, a direct computation gives
\begin{align*}
\alpha(u_\theta) &=1-\frac12\sin(2\theta),\\
\beta(u_\theta)  &=\cos(2\theta),\\
\gamma(u_\theta) &=2.
\end{align*}
Thus the directional inequality $2\beta(u_\theta)^2 \leq \alpha(u_\theta)\gamma(u_\theta)$ is equivalent to
\begin{equation*}
\sin(2\theta) \left(\sin(2\theta)-\frac12\right) \geq0.
\end{equation*}
Hence it fails whenever $0<\sin(2\theta)<\frac12$. In particular, such directions $u_\theta$ can be chosen arbitrarily close to $e_1$ or $e_2$, and therefore the corresponding counterexample boxes can be chosen with their axes arbitrarily close to the coordinate axes.
\end{remark}

\section{On the non-commutative case}\label{KuncondtoSIGMA}

In this section we prove Theorem \ref{unconditionalK} which we restate here.

\begin{theoremrep}
For any unconditional convex body $K$, any $\Sigma\in\mathcal{D}_{\succ0}$, and any $A\in\mathcal{S}$, we have
\begin{align}\label{VarlogBM11}
\operatorname{Var}_{\mu_{K,\Sigma}} \left( \left\langle A\Sigma^{-1}y,y\right\rangle \right) \leq \int_{\R^n} \left\langle (A^2\Sigma^{-1}+A\Sigma^{-1}A)y,y \right\rangle\,d\mu_{K,\Sigma}(y).
\end{align}
\end{theoremrep}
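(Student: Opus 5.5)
The plan is to exploit the sign symmetries of $\mu_{K,\Sigma}$. Write $\Sigma^{-1}=\operatorname{diag}(p_1,\ldots,p_n)$ with $p_i>0$ and $A=(a_{ij})$. Since $\Sigma$ is diagonal and $K$ is unconditional, $\mu_{K,\Sigma}$ is invariant under every reflection $y_i\mapsto -y_i$. As a consequence, every monomial that is odd in some coordinate has zero mean. I would then split
\begin{equation*}
f(y):=\left\langle A\Sigma^{-1}y,y\right\rangle = f_D(y)+\sum_{i<j} a_{ij}(p_i+p_j)\,y_iy_j, \qquad f_D(y)=\sum_i a_{ii}p_i y_i^2 .
\end{equation*}
By the reflection symmetries, the functions $f_D-\mathbb{E}f_D$ and $y_iy_j$ ($i<j$) are pairwise orthogonal in $L^2(\mu_{K,\Sigma})$. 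In every cross product some coordinate appears to an odd power. Hence
\begin{equation*}
\operatorname{Var}_{\mu_{K,\Sigma}}(f)=\operatorname{Var}_{\mu_{K,\Sigma}}(f_D)+\sum_{i<j}a_{ij}^2(p_i+p_j)^2\,\mathbb{E}\left[y_i^2y_j^2\right].
\end{equation*}

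On the right-hand side of \eqref{VarlogBM11}, only the diagonal entries of $A^2\Sigma^{-1}+A\Sigma^{-1}A$ survive, because $\mathbb{E}[y_iy_j]=0$ for $i\neq j$. These diagonal entries equal $\sum_k a_{ik}^2(p_i+p_k)$. Therefore the right-hand side is
\begin{equation*}
\sum_i 2a_{ii}^2p_i\,\mathbb{E}[y_i^2]+\sum_{i<j}a_{ij}^2(p_i+p_j)\left(\mathbb{E}[y_i^2]+\mathbb{E}[y_j^2]\right).
\end{equation*}
The diagonal part is handled by Lemma~\ref{CommutativeCase}, applied to the diagonal matrix $D=\operatorname{diag}(a_{11},\ldots,a_{nn})$, which commutes with $\Sigma$. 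That lemma gives exactly $\operatorname{Var}(f_D)\le\sum_i 2a_{ii}^2p_i\,\mathbb{E}[y_i^2]$. It thus remains to prove, for each pair $i\neq j$,
\begin{equation*}
(p_i+p_j)\,\mathbb{E}\left[y_i^2y_j^2\right]\le \mathbb{E}[y_i^2]+\mathbb{E}[y_j^2].
\end{equation*}

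The key step, which I regard as the only real obstacle, is the one-dimensional integration-by-parts inequality
\begin{equation*}
p_i\,\mathbb{E}\left[y_i^2\,h\right]\le \mathbb{E}[h]
\end{equation*}
for nonnegative $h$ that does not depend on $y_i$; here I take $h=y_j^2$. To prove it, fix the other coordinates. By convexity and unconditionality, the section of $K$ in the $y_i$-direction is a symmetric interval $[-r,r]$. Since $\Sigma$ is diagonal, the density factorizes, and in $y_i$ it is proportional to $e^{-p_iy_i^2/2}$. Integrating by parts gives
\begin{equation*}
\int_{-r}^r p_iy_i^2e^{-p_iy_i^2/2}\,dy_i=\int_{-r}^r e^{-p_iy_i^2/2}\,dy_i-2re^{-p_ir^2/2}\le\int_{-r}^r e^{-p_iy_i^2/2}\,dy_i .
\end{equation*}
Multiplying by $h\ge0$ and integrating over the remaining coordinates yields the claim. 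Applying it once in coordinate $i$ with $h=y_j^2$, and once in coordinate $j$ with $h=y_i^2$, and then summing, gives the pairwise inequality and hence \eqref{VarlogBM11}.

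The points I would double-check are the orthogonality bookkeeping and the diagonal entries of $A^2\Sigma^{-1}+A\Sigma^{-1}A$. I would also justify the integration by parts for unbounded sections ($r=\infty$), where the boundary term simply vanishes.
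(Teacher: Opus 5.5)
Your proposal is correct and follows the paper's proof essentially step by step: the same reflection-based variance decomposition, the same evaluation of the right-hand side, the even Gaussian Poincar\'e inequality (here via Lemma~\ref{CommutativeCase} for $\operatorname{diag}(a_{11},\ldots,a_{nn})$) for the diagonal part, and the same key estimate $p_i\int y_i^2y_j^2\,d\mu\le\int y_j^2\,d\mu$, applied with the roles of $i$ and $j$ swapped and summed. The only difference is in how that estimate is proved: you slice $K$ into symmetric intervals $[-r,r]$ and integrate by parts in one variable, whereas the paper applies the Gauss--Green theorem to the field $y_iy_j^2w(y)e_i$ and uses $\langle -y_ie_i,\nu_K\rangle\le 0$; your version is slightly more elementary, and since $K$ is compact the case $r=\infty$ you planned to check never arises.
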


Notice that the unconditionality of $K$ does not, in general, imply that the measure $\mu_{K,\Sigma}$ is unconditional for arbitrary $\Sigma\in\mathcal{S}_{\succ0}$. However, if in addition $\Sigma\in\mathcal{D}_{\succ0}$, then $\mu_{K,\Sigma}$ is unconditional.

\vspace{0.5cm}
The proof of Theorem \ref{unconditionalK} is based on decomposing $\left\langle A\Sigma^{-1}y,y\right\rangle$ into its diagonal and off-diagonal parts. The diagonal part is controlled by the even $\Sigma$-symmetry-improved Gaussian Poincar\'e inequality \eqref{EvenGaussianPoincareSigma1}, while for the off-diagonal part, the unconditionality of $\mu_{K,\Sigma}$ eliminates the mixed terms, and an application of the divergence theorem provides the desired bound for the remaining terms.

\begin{lemma}\label{unconditionalcancellations}
Let $\Sigma\in\mathcal{D}_{\succ0}$ and let	$K\subseteq\R^n$ be an unconditional convex body. Consider the probability measure
\begin{equation*}
d\mu(y) =\frac{1_K(y)}{\gamma_\Sigma(K)} \,d\gamma_\Sigma(y).
\end{equation*}
Then every monomial $q$ in which at least one coordinate appears with an odd exponent satisfies $\int q\,d\mu=0$. In particular,
\begin{enumerate}[(i)]
\item $\displaystyle \int_{\R^n} y_i y_j\,d\mu(y)=0$ for every $i\neq j$;
\item $\displaystyle \int_{\R^n} y_r^2y_i y_j\,d\mu(y)=0$ for every $r=1,\ldots,n$ and every $i\neq j$;
\item $\displaystyle \int_{\R^n} y_i y_j y_k y_\ell\,d\mu(y)=0$ whenever $i<j$, $k<\ell$, and $\{i,j\}\neq\{k,\ell\}$.
\end{enumerate}
\end{lemma}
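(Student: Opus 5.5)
The statement to prove is Lemma~\ref{unconditionalcancellations}. The plan is to show that $\mu$ is invariant under every coordinate reflection. Then any monomial that is odd in some coordinate is odd under the corresponding reflection, so its integral must vanish. For $i\in\{1,\ldots,n\}$ let $R_i$ be the reflection $y\mapsto(y_1,\ldots,-y_i,\ldots,y_n)$.

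First, $R_iK=K$ holds because $K$ is unconditional. Next, the Gaussian density is invariant under $R_i$. Since $\Sigma=\operatorname{diag}(\sigma_1,\ldots,\sigma_n)$ with $\sigma_k>0$, the density of $\gamma_\Sigma$ is a constant times $\exp\bigl(-\tfrac12\sum_k y_k^2/\sigma_k\bigr)$, which depends only on the squares $y_k^2$. Hence $d\mu$ has a density $\rho$ satisfying $\rho(R_iy)=\rho(y)$. Because $|\det R_i|=1$, the change of variables $y=R_iz$ gives $\int h\,d\mu=\int h\circ R_i\,d\mu$ for every $\mu$-integrable $h$.

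Now let $q(y)=\prod_k y_k^{m_k}$ with $m_i$ odd for some $i$. Then $q\circ R_i=-q$. Since $K$ is compact, $q$ is bounded on $K$ and hence $\mu$-integrable. Therefore $\int q\,d\mu=-\int q\,d\mu$, so $\int q\,d\mu=0$.

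The three listed cases follow by checking exponents.
\begin{enumerate}[(i)]
\item For $i\neq j$, the monomial $y_iy_j$ has exponent $1$ in $y_i$.
\item For $i\neq j$, consider $y_r^2y_iy_j$. At least one of $i,j$ differs from $r$, say $i\neq r$ (otherwise $j\neq r$ and the same argument applies). That coordinate appears with exponent $1$.
\item We have $\{i,j\}\neq\{k,\ell\}$ with $i<j$ and $k<\ell$, so some index occurs in one pair but not in the other. Say $i\notin\{k,\ell\}$; since $i\neq j$, $y_i$ appears exactly once in $y_iy_jy_ky_\ell$. The other cases are symmetric.
\end{enumerate}

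None of these steps is hard. The only point that needs care is that the reflection invariance of $\mu$ genuinely requires $\Sigma$ to be diagonal: off-diagonal entries of $\Sigma^{-1}$ would produce cross terms $y_iy_j$ in the exponent, which change sign under $R_i$.
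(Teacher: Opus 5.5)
Your proof is correct and follows the paper's argument. Unconditionality of $K$ and diagonality of $\Sigma$ make $\mu$ invariant under each coordinate reflection $R_i$, so every monomial odd in some coordinate integrates to zero; your explicit exponent checks for (i)--(iii), which the paper leaves implicit, are also correct.
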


\begin{proof}
Since $\Sigma\in\mathcal D_{\succ0}$, the Gaussian measure $\gamma_\Sigma$ is invariant under reflection in each coordinate. Since $K$ is unconditional, the same is true for the restricted measure $\mu$. Let $R_m$ denote reflection in the $m$-th coordinate, $m=1,\ldots,n$. Then $\mu$ is invariant under $R_m$. Therefore, for every integrable function $f$, if $f(R_m y)=-f(y)$ for some $m$, then \begin{align}\label{eq:reflection-principle}
\int_{\R^n}f\,d\mu =\int_{\R^n}f\circ R_m\,d\mu =-\int_{\R^n}f\,d\mu,
\end{align}
and hence
\begin{equation*}
\int_{\R^n}f\,d\mu=0.
\end{equation*}
Any monomial in which some coordinate appears with an odd exponent changes sign under reflection in that coordinate. Hence {\rm (i)--(iii)} follow.
\end{proof}

\begin{remark} Recall that, for $f_1,\ldots,f_m\in L^2(\mu)$, \begin{align}\label{eq:variance-sum} \operatorname{Var}_{\mu} \left( \sum_{\alpha=1}^m f_\alpha \right) &= \sum_{\alpha=1}^m \operatorname{Var}_{\mu}(f_\alpha) + 2\sum_{1\leq\alpha<\beta\leq m} \operatorname{Cov}_{\mu}(f_\alpha,f_\beta). \end{align} \end{remark}

\begin{lemma}\label{lemvariancedecomposition}
Let $\Sigma\in\mathcal{D}_{\succ0}$ and $A\in\mathcal{S}$, and let
$\mu$ be a probability measure on $\R^n$ satisfying properties {\rm (i)--(iii)} of Lemma~\ref{unconditionalcancellations}. Write $A=(a_{ij})_{i,j=1}^n$ and $\Sigma^{-1}=\operatorname{diag}(p_1,\ldots,p_n)$, where $p_i>0$. Then
\begin{align*}
\operatorname{Var}_{\mu}\left(\left\langle A\Sigma^{-1}y,y\right\rangle\right) =\operatorname{Var}_{\mu}\left(\sum_{i=1}^n a_{ii}p_i y_i^2\right)+\sum_{1\leq i<j\leq n}a_{ij}^2(p_i+p_j)^2\int_{\R^n}y_i^2y_j^2\,d\mu(y).
\end{align*}
\end{lemma}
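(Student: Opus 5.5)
We expand the quadratic form explicitly and then apply the variance decomposition \eqref{eq:variance-sum}. Since $\Sigma^{-1}=\operatorname{diag}(p_1,\ldots,p_n)$, we have $(A\Sigma^{-1})_{ij}=a_{ij}p_j$. Hence
\begin{equation*}
\left\langle A\Sigma^{-1}y,y\right\rangle=\sum_{i,j}a_{ij}p_jy_iy_j=\sum_{i=1}^n a_{ii}p_iy_i^2+\sum_{1\leq i<j\leq n}a_{ij}(p_i+p_j)y_iy_j,
\end{equation*}
where we grouped the $(i,j)$ and $(j,i)$ terms using the symmetry of $A$. We then set $D(y)=\sum_i a_{ii}p_iy_i^2$ and $g_{ij}(y)=a_{ij}(p_i+p_j)y_iy_j$ for $i<j$, and apply \eqref{eq:variance-sum} to the sum $D+\sum_{i<j}g_{ij}$. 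All these functions are polynomials of degree at most two, and we implicitly assume that $\mu$ has finite fourth moments, as is the case for $\mu_{K,\Sigma}$. Under this assumption all of them lie in $L^2(\mu)$.

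The first step computes the variances of the off-diagonal terms. By property (i), $\int g_{ij}\,d\mu=0$. Therefore
\begin{equation*}
\operatorname{Var}_\mu(g_{ij})=\int g_{ij}^2\,d\mu=a_{ij}^2(p_i+p_j)^2\int y_i^2y_j^2\,d\mu.
\end{equation*}
This is exactly the second sum in the claimed identity.

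The second step shows that all covariances vanish. For $\operatorname{Cov}_\mu(D,g_{ij})$, the mean of $g_{ij}$ is zero, so this covariance equals $\int D\,g_{ij}\,d\mu$. Expanding, this is a linear combination of the integrals $\int y_r^2y_iy_j\,d\mu$ with $i\neq j$, and each of these vanishes by (ii). For $\operatorname{Cov}_\mu(g_{ij},g_{k\ell})$ with $\{i,j\}\neq\{k,\ell\}$, both means vanish, so the covariance is a multiple of $\int y_iy_jy_ky_\ell\,d\mu$. This integral is zero by (iii).

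Substituting into \eqref{eq:variance-sum} then gives the identity, with the variance of $D$ kept as it stands. The only real care needed is the bookkeeping in the off-diagonal regrouping, namely obtaining the coefficient $a_{ij}(p_i+p_j)$. The case analysis of (iii) is not an obstacle: if the index pairs overlap in one index, some coordinate still appears with an odd exponent, and (iii) covers exactly the case of distinct unordered pairs.
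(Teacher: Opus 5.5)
Your proof is correct and follows the paper's argument step by step. It uses the same regrouping into $D+\sum_{i<j}a_{ij}(p_i+p_j)y_iy_j$ and the variance-sum formula, with (i) and (ii) killing $\operatorname{Cov}_\mu(D,y_iy_j)$, (i) and (iii) killing the off-diagonal cross-covariances, and (i) giving the off-diagonal variances. Your explicit finite-fourth-moment caveat is a harmless addition that the paper leaves implicit; it holds for $\mu_{K,\Sigma}$, the only measure to which the lemma is applied.
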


\begin{proof}
By the symmetry of $A$,
\begin{align*}
\left\langle A\Sigma^{-1}y,y\right\rangle
&=\sum_{i,j=1}^n a_{ij}p_j y_i y_j\\
&=\sum_{i=1}^n a_{ii}p_i y_i^2 +\sum_{1\leq i<j\leq n} a_{ij}(p_i+p_j)y_i y_j\\
&=:D+\sum_{i<j}c_{ij}y_i y_j,
\end{align*}
where $D(y):=\sum_{i=1}^n a_{ii}p_i y_i^2$ and $c_{ij}:=a_{ij}(p_i+p_j)$. Hence, applying 
\eqref{eq:variance-sum},
\begin{align*}
\operatorname{Var}_{\mu} \left( \left\langle A\Sigma^{-1}y,y\right\rangle \right) = \operatorname{Var}_{\mu}(D)
&+\sum_{i<j} \operatorname{Var}_{\mu}(c_{ij}y_i y_j) \nonumber\\
&+2\sum_{i<j} c_{ij}\operatorname{Cov}_{\mu}(D,y_i y_j) \nonumber\\
&+2\sum_{\substack{i<j,\;k<\ell\\ \{i,j\}\neq\{k,\ell\}}}c_{ij}c_{k\ell} \operatorname{Cov}_{\mu}(y_i y_j,y_k y_\ell).
\label{decompositoin}
\end{align*}
By assumptions {\rm (i)} and {\rm (ii)}, $\operatorname{Cov}_{\mu}(D,y_i y_j)=0$ for $i<j$, and by assumptions {\rm (i)} and {\rm (iii)}, $\operatorname{Cov}_{\mu}(y_i y_j,y_k y_\ell)=0$ for $i<j$ and $k<\ell$, whenever $\{i,j\}\neq\{k,\ell\}$. Finally, using once more assumption {\rm (i)}, we obtain 
\begin{align*}
\operatorname{Var}_{\mu}(c_{ij}y_i y_j)=c_{ij}^2 \operatorname{Var}_{\mu}(y_i y_j)=a_{ij}^2(p_i+p_j)^2 \int_{\R^n} y_i^2y_j^2\,d\mu(y),
\end{align*}
and this finishes the proof.
\end{proof}

\begin{proof}[Proof of Theorem~\ref{unconditionalK}]

Write $\Sigma^{-1}=\operatorname{diag}(p_1,\ldots,p_n)$, where $p_i>0$ and $A=(a_{ij})_{i,j=1}^n$. Notice that $\mu$ is unconditional. Therefore, by assumption (i) of Lemma \ref{unconditionalcancellations}, for every matrix
$M=(M_{ij})_{i,j=1}^n$,
\begin{equation*}
\int_{\R^n} \langle My,y\rangle\,d\mu(y) =\sum_{i=1}^n M_{ii} \int_{\R^n} y_i^2\,d\mu(y).
\end{equation*}
Set $m_i:=\int_{\R^n}y_i^2\,d\mu(y)$. Hence, using the symmetry of $A$, the right-hand side of \eqref{VarlogBM1} is written
\begin{align*}
\int_{\R^n} \Big[ \left\langle A^2\Sigma^{-1}y,y\right\rangle +\left\langle A\Sigma^{-1}Ay,y\right\rangle \Big] \,d\mu(y) 
&=\sum_{i=1}^n \left( A^2\Sigma^{-1}+A\Sigma^{-1}A \right)_{ii}m_i\\
&=\sum_{i=1}^n\sum_{j=1}^n a_{ij}^2(p_i+p_j)m_i\\
&=2\sum_{i=1}^n a_{ii}^2p_i m_i +\sum_{1\leq i<j\leq n} a_{ij}^2(p_i+p_j)(m_i+m_j). 
\end{align*}
We next consider the left-hand side of
\eqref{VarlogBM1}.
Since $\mu$ is unconditional, assumptions (i)--(iii) of Lemma~\ref{unconditionalcancellations} are satisfied and therefore, Lemma~\ref{lemvariancedecomposition} gives 
\begin{align}\label{eqvardecomposunco}
\operatorname{Var}_{\mu} \left( \left\langle A\Sigma^{-1}y,y\right\rangle\right)=\operatorname{Var}_{\mu}(q_0)+\sum_{1\leq i<j\leq n} a_{ij}^2(p_i+p_j)^2 \int_{\R^n} y_i^2y_j^2\,d\mu(y).
\end{align}
where $q_0(y):=\sum_{i=1}^n a_{ii}p_i y_i^2$. We claim that
\begin{align}\label{csdcrevevre}
\operatorname{Var}_{\mu}(q_0) \leq 2\sum_{i=1}^n a_{ii}^2p_i m_i.
\end{align}
Indeed, the function $q_0(y)=\sum_{i=1}^n a_{ii}p_i y_i^2$ is even, so applying the $\Sigma$-symmetry-improved Gaussian Poincar\'e inequality \eqref{EvenGaussianPoincareSigma1} with $\Sigma=\operatorname{diag}(p_1^{-1},\ldots,p_n^{-1})$, we obtain
\begin{align*}
\operatorname{Var}_{\mu}(q_0)
&\leq \frac12 \int_{\R^n} \langle \Sigma\nabla q_0(y),\nabla q_0(y)\rangle \,d\mu(y)\\
&=\frac12 \int_{\R^n} 4\sum_{i=1}^n \frac{1}{p_i} a_{ii}^2p_i^2y_i^2 \,d\mu(y)\\
&=2\sum_{i=1}^n a_{ii}^2p_i m_i.
\end{align*}
Next, fix $i\neq j$ and set $w(y):=e^{-\frac12\sum_{k=1}^n p_k y_k^2}$. Then
\begin{align*}
\frac{\partial}{\partial y_i} \left( y_i y_j^2w(y) \right)
&= y_j^2w(y) -p_i y_i^2y_j^2w(y)\\
&= \left( y_j^2-p_i y_i^2y_j^2 \right)w(y).
\end{align*}
Since $K$ is a convex body, the outer unit normal $\nu_K$ exists $\mathcal H^{n-1}$-almost everywhere on $\partial K$, and the Gauss--Green divergence theorem applies. Thus, for the vector field $F(y):=y_i y_j^2 w(y)e_i$, we obtain
\begin{align}
\int_K \left(y_j^2-p_i y_i^2y_j^2\right)w(y)\,dy
&=\int_K \operatorname{div}F(y)\,dy \notag\\
&=\int_{\partial K} \langle F(y),\nu_K(y)\rangle \,d\mathcal H^{n-1}(y) \notag\\
&=\int_{\partial K} y_i y_j^2 w(y) \langle e_i,\nu_K(y)\rangle \,d\mathcal H^{n-1}(y) 
\label{ithdenkserw}
\end{align}
where $\nu_K(y)$ denotes the outer unit normal to $\partial K$. By convexity, $\langle z-y,\nu_K(y)\rangle\leq0$ for every $z\in K$ and for $\mathcal H^{n-1}$-almost every $y\in\partial K$. Since $K$ is unconditional, $y-y_i e_i\in K$. Taking $z=y-y_i e_i$, we obtain $\langle -y_i e_i,\nu_K(y)\rangle\leq0$. Therefore, \eqref{ithdenkserw} is nonnegative for $\mathcal H^{n-1}$-almost every $y\in\partial K$, and hence 
\begin{equation*}
\int_K \left(y_j^2-p_i y_i^2y_j^2\right)w(y)\,dy \geq0
\end{equation*}
Dividing by	$\int_Kw(y)\,dy$, we obtain
\begin{align*}
m_j-p_i\int_{\R^n} y_i^2y_j^2\,d\mu(y) &\geq 0.
\end{align*}
Therefore
\begin{align*}
(p_i+p_j) \int_{\R^n} y_i^2y_j^2\,d\mu(y) \leq m_i+m_j.
\end{align*}
Applying this inequality and \eqref{csdcrevevre} to \eqref{eqvardecomposunco} completes the proof.
\end{proof}

\section{Appendix A}\label{GaussianCase}
Here we show that, for any $\Sigma\in\mathcal{S}_{\succ0}$ and any $A\in\mathcal{S}$, it holds
\begin{equation}
\operatorname{Var}_{\gamma_\Sigma}\left(\left\langle A\Sigma^{-1}y,y\right\rangle\right)=\int_{\R^n}\left[\left\langle A^2\Sigma^{-1}y,y\right\rangle+\left\langle A\Sigma^{-1}Ay,y\right\rangle\right]\,d\gamma_\Sigma(y).
\end{equation}
For any matrix $M$, since $\gamma_\Sigma$ has covariance matrix $\Sigma$,
\begin{equation}\label{traceIdentity}
\int_{\R^n}\langle My,y\rangle\,d\gamma_{\Sigma}(y)=\operatorname{Tr}(M\Sigma).
\end{equation}
Moreover, if $M$ is symmetric, then
\begin{equation}\label{variationformulaforquaderatic}
\operatorname{Var}_{\gamma_{\Sigma}}(\langle My,y\rangle)=2\operatorname{Tr}(M\Sigma M\Sigma).
\end{equation}
First,
\begin{align*}
\int_{\R^n}\left[\left\langle A^2\Sigma^{-1}y,y\right\rangle+\left\langle A\Sigma^{-1}Ay,y\right\rangle\right]\,d\gamma_\Sigma(y)
&=\operatorname{Tr}\left(A^2\Sigma^{-1}\Sigma\right)+\operatorname{Tr}\left(A\Sigma^{-1}A\Sigma\right)\\
&=\operatorname{Tr}(A^2)+\operatorname{Tr}\left(A\Sigma^{-1}A\Sigma\right).
\end{align*}
On the other hand, by \eqref{variationformulaforquaderatic},
\begin{align*}
\operatorname{Var}_{\gamma_\Sigma}\left(\left\langle A\Sigma^{-1}y,y\right\rangle\right)
&=\operatorname{Var}_{\gamma_\Sigma}\left(\left\langle\frac{A\Sigma^{-1}+\Sigma^{-1}A}{2}y,y\right\rangle\right)\\
&=2\operatorname{Tr}\left[\left(\frac{A\Sigma^{-1}+\Sigma^{-1}A}{2}\right)\Sigma\left(\frac{A\Sigma^{-1}+\Sigma^{-1}A}{2}\right)\Sigma\right]\\
&=\frac12\operatorname{Tr}\left[\left(A+\Sigma^{-1}A\Sigma\right)^2\right]\\
&=\operatorname{Tr}(A^2)+\operatorname{Tr}\left(A\Sigma^{-1}A\Sigma\right).
\end{align*}
Thus equality holds in \eqref{VarlogBM1} when $K=\R^n$.

\section{Appendix B}
We prove the following $\Sigma$-Gaussian analogue of \cite[Fact~8]{CR20}. The same proof applies, and we include it here for completeness.
\begin{lemma}\label{lem:thin-box-convergence}
Fix an orthonormal basis $u_1,\ldots,u_n$ of $\R^n$. For $\varepsilon>0$, let
\begin{equation*}
K_\varepsilon 
=\left\{ t u_1+\sum_{j=2}^n s_j u_j: |t|\leq\frac1\varepsilon,\quad |s_j|\leq\varepsilon \right\}.
\end{equation*}
Let $\Sigma\in\mathcal S_{\succ0}$. If $h\in C^1(\R^n)$ satisfies, for some $C>0$, $|h(x)|\leq Ce^{C|x|}$ and $|\nabla h(x)|\leq Ce^{C|x|}$ for all $x\in\R^n$, then
\begin{equation*}
\lim_{\varepsilon\to0} \int_{\R^n}h(x)\,d\mu_{K_\varepsilon,\Sigma}(x)
=\int_{\R}h(tu_1)\,d\nu_{u_1}(t),
\end{equation*}
where $\nu_{u_1} = N\left( 0,\, \frac{1}{\langle\Sigma^{-1}u_1,u_1\rangle} \right)$.
\end{lemma}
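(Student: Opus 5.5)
We work in the coordinates $x=tu_1+\sum_{j\ge2}s_ju_j$, writing $s=(s_2,\ldots,s_n)$. In these coordinates $K_\varepsilon=[-1/\varepsilon,1/\varepsilon]\times[-\varepsilon,\varepsilon]^{n-1}$. Let $Q(x)=\langle\Sigma^{-1}x,x\rangle$, so that
\[
Q(tu_1+s)=\alpha t^2+2t\langle b,s\rangle+\langle Cs,s\rangle,
\]
where $\alpha=\langle\Sigma^{-1}u_1,u_1\rangle>0$, $b\in\R^{n-1}$ and $C\succ0$. The measure $\mu_{K_\varepsilon,\Sigma}$ has density proportional to $\mathbf 1_{K_\varepsilon}e^{-Q/2}$. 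We write
\[
\int h\,d\mu_{K_\varepsilon,\Sigma}=\frac{N_\varepsilon(h)}{N_\varepsilon(1)},\qquad N_\varepsilon(h)=(2\varepsilon)^{-(n-1)}\int_{|t|\le1/\varepsilon}\int_{[-\varepsilon,\varepsilon]^{n-1}}h(tu_1+s)e^{-Q(tu_1+s)/2}\,ds\,dt.
\]
The factor $(2\varepsilon)^{-(n-1)}$ cancels in the ratio. The plan is to show $N_\varepsilon(h)\to\int_\R h(tu_1)e^{-\alpha t^2/2}\,dt$ for every admissible $h$. Applying this to $h$ and to $h\equiv1$ and taking the quotient then gives the claim with $\nu_{u_1}=N(0,1/\alpha)$.

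For fixed $t$, the inner average over $s\in[-\varepsilon,\varepsilon]^{n-1}$ tends to $h(tu_1)e^{-\alpha t^2/2}$ by continuity of $h$ and $Q$. We also have $\mathbf 1_{|t|\le1/\varepsilon}\to1$. So the limit follows by dominated convergence once we find an integrable majorant in $t$, uniform in $\varepsilon\le1$.

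Finding this majorant is the main step. For $|s|\le\sqrt{n}\,\varepsilon\le\sqrt n$ we have
\[
2|t\langle b,s\rangle|\le 2|b|\sqrt n\,|t|,
\]
and the term $\langle Cs,s\rangle$ is nonnegative. Hence
\[
e^{-Q/2}\le e^{-\alpha t^2/2+|b|\sqrt n|t|}.
\]
The growth hypothesis gives $|h(tu_1+s)|\le Ce^{C(|t|+\sqrt n)}$. Together these bound the integrand, uniformly in $s$ and $\varepsilon$, by $C'e^{-\alpha t^2/2+C''|t|}$, which is integrable on $\R$. This justifies dominated convergence for both $N_\varepsilon(h)$ and $N_\varepsilon(1)$.

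For the pointwise convergence, the continuity of $h$ suffices. The gradient bound is only needed if one prefers a quantitative estimate via the mean value theorem, $|h(tu_1+s)-h(tu_1)|\le|s|\sup|\nabla h|$, which gives an error of order $\varepsilon e^{C|t|}$ that is also dominated. Finally, the limit $\int e^{-\alpha t^2/2}\,dt=\sqrt{2\pi/\alpha}$ is positive, so the quotient converges to $\int h(tu_1)\,d\nu_{u_1}(t)$.
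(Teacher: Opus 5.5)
Your argument is correct, and it takes a slightly leaner route than the paper's.

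The paper splits the quotient into two pieces. The first is $G(\varepsilon)$, where $h$ is frozen on the axis as $h(tu_1)$. Dominated convergence is applied only to $G$, after rescaling the thin coordinates $y=\varepsilon z$, which is exactly your normalization by $(2\varepsilon)^{-(n-1)}$. The second piece is the error $F(\varepsilon)-G(\varepsilon)$. The paper controls it by the mean value theorem, which is where the growth bound on $\nabla h$ enters. It then uses separate bounds $A\varepsilon^{n-1}$ and $B\varepsilon^{n-1}$ on the numerator and denominator.

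You instead note that the growth bound on $h$ alone already gives a uniform majorant $|h(tu_1+s)|\le Ce^{C(|t|+\sqrt n)}$ on the thin box. So a single dominated convergence argument, applied directly to $h(tu_1+s)e^{-Q/2}$, covers everything. Pointwise convergence of the cube averages then needs only continuity. Your Gaussian majorant, obtained from $\langle Cs,s\rangle\ge0$ and the bounded cross term, is valid. The paper's cruder bound $\langle\Sigma^{-1}x,x\rangle\ge c|x|^2\ge ct^2$ would also work.

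What your version buys is generality: the hypothesis on $\nabla h$, and indeed the $C^1$ assumption, is unnecessary, since continuity plus exponential growth of $h$ suffices. The paper's decomposition only adds an explicit $O(\varepsilon)$ bound on the cost of replacing $h(tu_1+y)$ by $h(tu_1)$, which the application does not need.

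One cosmetic point: you use $C$ both for the growth constant and for the lower-right block of $\Sigma^{-1}$, so rename one of them.
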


\begin{proof}
Write $x=(t,y)$ in the orthonormal basis $u_1,\ldots,u_n$, where $t\in\R$ is the coordinate in the $u_1$ direction and $y=(y_2,\ldots,y_n)\in\R^{n-1}$ denotes the remaining coordinates in the directions $u_2,\ldots,u_n$. Write
\begin{equation*}
K_\varepsilon = \left\{ tu_1+y:\ |t|\leq\frac1\varepsilon,\ y\in Q_\varepsilon \right\},
\end{equation*}
where
\begin{equation*}
Q_\varepsilon
=\left\{ \sum_{j=2}^n s_j u_j:\ |s_j|\leq\varepsilon \right\}.
\end{equation*}
Define
\begin{align*}
F(\varepsilon)
&:=\int h\,d\mu_{K_\varepsilon,\Sigma}= \frac{ \displaystyle \int_{-1/\varepsilon}^{1/\varepsilon} \int_{Q_\varepsilon} h(tu_1+y) e^{-\frac12\langle\Sigma^{-1}(tu_1+y),tu_1+y\rangle} \,dy\,dt}{\displaystyle
\int_{-1/\varepsilon}^{1/\varepsilon} \int_{Q_\varepsilon} e^{-\frac12\langle\Sigma^{-1}(tu_1+y),tu_1+y\rangle} \,dy\,dt},
\end{align*}
and
\begin{align*}
G(\varepsilon)
:&=\frac{ \displaystyle \int_{-1/\varepsilon}^{1/\varepsilon} \int_{Q_\varepsilon} h(tu_1) e^{-\frac12\langle\Sigma^{-1}(tu_1+y),tu_1+y\rangle} \,dy\,dt }{ \displaystyle \int_{-1/\varepsilon}^{1/\varepsilon}
\int_{Q_\varepsilon} e^{-\frac12\langle\Sigma^{-1}(tu_1+y),tu_1+y\rangle} \,dy\,dt}\\
&=\frac{ \displaystyle \int_{-1/\varepsilon}^{1/\varepsilon} \int_{[-1,1]^{n-1}} h(tu_1) e^{-\frac12 \left\langle \Sigma^{-1} \left(tu_1+\varepsilon\sum_{j=2}^n z_j u_j\right), tu_1+\varepsilon\sum_{j=2}^n z_j u_j \right\rangle} \,dz\,dt }{ \displaystyle \int_{-1/\varepsilon}^{1/\varepsilon} \int_{[-1,1]^{n-1}} e^{-\frac12 \left\langle \Sigma^{-1} \left(tu_1+\varepsilon\sum_{j=2}^n z_j u_j\right),
tu_1+\varepsilon\sum_{j=2}^n z_j u_j \right\rangle} \,dz\,dt }.
\end{align*}
Since $\Sigma^{-1}\succ0$, there exist $c,C>0$ such that
\begin{equation*}
c|x|^2 \leq \langle\Sigma^{-1}x,x\rangle \leq C|x|^2.
\end{equation*}
The growth assumption on $h$, together with the lower bound on $\langle\Sigma^{-1}x,x\rangle$, allows us to apply the dominated convergence theorem to both the numerator and denominator of $G(\varepsilon)$. Hence,
\begin{equation*}
\lim_{\varepsilon\to0^+}G(\varepsilon)
= \frac{ \displaystyle \int_{\R} h(tu_1) e^{-\frac12\langle\Sigma^{-1}u_1,u_1\rangle t^2}\,dt }{ \displaystyle \int_{\R} e^{-\frac12\langle\Sigma^{-1}u_1,u_1\rangle t^2}\,dt }
= \int_{\R}h(tu_1)\,d\nu_{u_1}(t).
\end{equation*}
It remains to show that $|F(\varepsilon)-G(\varepsilon)|\to0$. Let \(0<\varepsilon<1\). If \(tu_1+y\in K_\varepsilon\), then $|y|\leq \sqrt{n-1}\,\varepsilon$. 
Using the growth assumption on \(\nabla h\) along the segment joining \(tu_1\) and \(tu_1+y\), we obtain
\begin{align*}
|h(tu_1+y)-h(tu_1)|
&\leq |y|\sup_{0\leq s\leq1} |\nabla h(tu_1+sy)|\\
&\leq C|y| \sup_{0\leq s\leq1} e^{C|tu_1+sy|}\\
&\leq C\sqrt{n-1}\,\varepsilon\, e^{C(|t|+\sqrt{n-1})}\\
&\leq C_1\varepsilon e^{C_1|t|},
\end{align*}
where \(C_1>0\) is independent of \(\varepsilon\). Hence,
\begin{equation}\label{NumerDenom}
|F(\varepsilon)-G(\varepsilon)| \leq C_1\varepsilon \frac{ \displaystyle \int_{-1/\varepsilon}^{1/\varepsilon} \int_{Q_\varepsilon} e^{C_1|t|} e^{-\frac12\langle\Sigma^{-1}(tu_1+y),tu_1+y\rangle} \,dy\,dt
}{ \displaystyle \int_{-1/\varepsilon}^{1/\varepsilon} \int_{Q_\varepsilon} e^{-\frac12\langle\Sigma^{-1}(tu_1+y),tu_1+y\rangle} \,dy\,dt }.
\end{equation}
Using the lower bound on $\langle\Sigma^{-1}x,x\rangle$, the numerator in \eqref{NumerDenom} is bounded above by
\begin{align*}
(2\varepsilon)^{n-1} \int_{\R} e^{C_1|t|-\frac c2t^2}\,dt =: A\varepsilon^{n-1},
\end{align*}
where $A>0$ is independent of $\varepsilon$. On the other hand, using the upper bound on $\langle\Sigma^{-1}x,x\rangle$ and restricting the integral to $|t|\leq1$, we obtain, for $0<\varepsilon<1$, that the denominator in \eqref{NumerDenom} is bounded below by 
\begin{align*}
(2\varepsilon)^{n-1} \int_{-1}^1 e^{-\frac C2(t^2+n-1)}\,dt =:B\varepsilon^{n-1}
\end{align*}
where $B>0$ is independent of $\varepsilon$. Consequently, as $\varepsilon\to0$
\begin{equation*}
|F(\varepsilon)-G(\varepsilon)| \leq C_1\varepsilon\frac AB \longrightarrow0. 
\end{equation*}
Therefore $\lim_{\varepsilon\to0^+}F(\varepsilon)=\lim_{\varepsilon\to0^+}G(\varepsilon)=\int_{\R}h(tu_1)\,d\nu_{u_1}(t)$, which proves the lemma.
\end{proof}

{\textbf{Acknowledgements.}} The author thanks K\'aroly B\"or\"oczky and Apostolos Giannopoulos for helpful comments that improved the manuscript. AI assistance was provided for the use of the divergence theorem on page 12, a shorter proof of the author’s initial argument for $(ii)\Rightarrow(i)$ on page 8, and Remark \ref{closeTOaxis}.

\bigskip

\noindent
\textsc{Pavlos Kalantzopoulos}\\
Department of Mathematics, University of Waterloo, Canada\\
\textit{Email address:} \texttt{pkalantzopoulos@uwaterloo.ca}


\begin{thebibliography}{999}

\bibitem{BLYZ12}
K.~J.~B\"or\"oczky, E.~Lutwak, D.~Yang, and G.~Zhang:
{\em The log-Brunn--Minkowski inequality}.
Advances in Mathematic 231 (2012), no. 3, 1974--1997.

\bibitem{BK22}
K.J. B\"or\"oczky, P. Kalantzopoulos:
{\em Log-Brunn--Minkowski inequality under symmetry.}
Transactions of the American Society, 375 (2022), no. 8, 5987--6013.

\bibitem{CHLL20}
S. Chen, Y. Huang, Q.-R. Li, J. Liu:
{\em The $L_p$-Brunn-Minkowski inequality for $p<1$.}
Advances in Mathematics, 368 (2020), 107166.

\bibitem{Colesanti08}
A.~Colesanti:
{\em From the Brunn--Minkowski inequality to a class of Poincar\'e-type inequalities}.
Commun.\ Contemp.\ Math.\ 10 (2008), 765--772.

\bibitem{CLM17}
A. Colesanti, G. Livshyts, A. Marsiglietti:
{\em On the stability of Brunn-Minkowski type inequalities.}
J. Functional Analysis, 273 (2017), no. 3, 1120--1139.

\bibitem{CFM04}
D.~Cordero-Erausquin, M.~Fradelizi and B.~Maurey:
{\em The (B) conjecture for the Gaussian measure of dilates of symmetric convex sets and related problems}.
J.\ Funct.\ Anal.\ 214 (2004), 410--427.

\bibitem{CR20}
D.~Cordero-Erausquin and L.~Rotem:
{\em Several results regarding the (B)-conjecture}.
In: Geometric Aspects of Functional Analysis,
Lecture Notes in Math.\ 2256, Springer (2020), 247--262.

\bibitem{CR23}
D.~Cordero-Erausquin and L.~Rotem:
{\em Improved log-concavity for rotationally invariant measures of symmetric convex sets}.
Annals of Prob. 51 (2023), 987--1003.

\bibitem{EsM21}
A. Eskenazis, G. Moschidis:
{\em The dimensional Brunn-Minkowski inequality in Gauss space.}
Journal of Functional Analysis, 280 (2021), Paper No. 108914, 19 pp.

\bibitem{Iffland26}
L. Iffland:
{\em The local logarithmic Brunn--Minkowski inequality for bodies of revolution.}
arXiv:2602.17912 (2026).

\bibitem{KoM22}
A.V. Kolesnikov, E. Milman:
{\em Local $L_p$-Brunn--Minkowski inequalities for $p<1$.}
Mem. Amer. Math. Soc., 277 (2022), no. 1360.

\bibitem{KL21}
A.~V.~Kolesnikov and G.~V.~Livshyts:
{\em On the Gardner--Zvavitch conjecture: symmetry in inequalities of Brunn--Minkowski type}.
Adv.\ Math.\ 384 (2021), 107689.

\bibitem{KLR25}
A.~V.~Kolesnikov, G.~Livshyts and L.~Rotem:
{\em On $p$-Brunn--Minkowski and Brascamp--Lieb inequalities}.
arXiv:2507.12099 (2025).

\bibitem{Lu26}
X. Lu:
{\em Logarithmic Brunn--Minkowski inequality under $n-2$ reflection symmetries.}
arXiv:2608.20730 (2026).

\bibitem{Milman25}
E. Milman:
{\em Centro-affine differential geometry and the log-Minkowski problem.}
J. Eur. Math. Soc., 27 (2025), 709--772.

\bibitem{Put21}
E. Putterman:
{\em Equivalence of the local and global versions of the $L_p$-Brunn-Minkowski inequality.}
Journal of Functional Analysis, 280 (2021), 108956.

\bibitem{Put26}
E. Putterman:
{\em Sharp stability for the $(B)$-theorem.}
arXiv:2608.08472 (2026).

\bibitem{Rotem14}
L. Rotem:
{\em A letter: The log-Brunn--Minkowski inequality for complex bodies.}
arXiv:1412.5321 (2014).

\bibitem{Saroglou14}
{\em C.~Saroglou}:
{\em More on logarithmic sums of convex bodies}.
Mathematika 62 (2016), 818--841.

\bibitem{Saroglou15}
C. Saroglou:
{\em Remarks on the conjectured log-Brunn-Minkowski inequality.}
Geom. Dedicata, 177 (2015), 353--365.

\bibitem{vanHandel23}
R. van Handel:
{\em The local logarithmic Brunn--Minkowski inequality for zonoids.}
Geometric Aspects of Functional Analysis,
Lecture Notes in Math. 2327, Springer (2023), 355--379.

\bibitem{YZ19}
Y.~Yang and D.~Zhang:
{\em The log-Brunn--Minkowski inequality in $\R^3$}.
Proc.\ Amer.\ Math.\ Soc.\ 147 (2019), no.~10, 4465--4475.


\end{thebibliography}
\end{document}